\newcommand{\gf}[1]{\ensuremath{\mathrm{GF}(#1)}}
\newtheorem{theorem}{Theorem}[section]
\newtheorem{lemma}[theorem]{Lemma}
\newtheorem{corollary}[theorem]{Corollary}
\newtheorem{question}{Question}
\theoremstyle{definition}
\newtheorem{example}[theorem]{Example}
\newtheorem{definition}[theorem]{Definition}
\newtheorem{remark}[theorem]{Remark}
\newtheorem{construction}[theorem]{Construction}
\newtheorem{proposition}[theorem]{Proposition}
\title{Weighted external difference families and R-optimal AMD codes}
\author{S. Huczynska and M.B.~Paterson}
\date{}
\begin{document}
\maketitle

\begin{abstract}
In this paper, we provide a mathematical framework for characterizing AMD codes that are R-optimal.  We introduce a new combinatorial object, the \emph{reciprocally-weighted external difference family (RWEDF)}, which corresponds precisely to an R-optimal weak AMD code.  This definition subsumes known examples of existing optimal codes, and also encompasses combinatorial objects not covered by previous definitions in the literature.  By developing structural group-theoretic characterizations, we exhibit infinite families of new RWEDFs, and new construction methods for known objects such as near-complete EDFs.  Examples of RWEDFs in non-abelian groups are also discussed.
\end{abstract}

\section{Introduction}
Algebraic manipulation detection (AMD) codes were introduced in the cryptographic literature by Cramer, Dodis, Fehr, Padr\'{o} and Wichs as a tool with a range of cryptographic applications.  They are a generalisation of existing approaches to constructing secret sharing schemes secure against cheating \cite{CraDodFehPadWic}.  Considerable attention has been devoted to studying and constructing various types of AMD codes \cite{amdleakage,Cramer2013,CraPadrXing,wangkarpov}.  Paterson and Stinson explored combinatorial properties of AMD codes, including connections with various types of external difference families \cite{PatSti}.  Strong external difference families, which give rise to AMD codes in the {\em strong} model, have received much recent attention \cite{BAO2017,HuczPat,jedwabli,martinstinson,WenEtAl,WenYanFen}.

In this paper we consider the so-called {\em weak} model for AMD codes.  Before giving the definition, we first establish some notation and conventions that we will use throughout the paper. Unless otherwise stated, our groups will be abelian and written additively.  For a group $G$, we denote $G \setminus \{0\}$ by $G^*$ (where $0$ is the identity of $G$).  In studying AMD codes it is necessary to consider differences between group elements occurring in disjoint subsets of an abelian group, and we find it convenient to define the following notation:
\begin{definition}
Let $G$ be a finite abelian group and let $\{ A_1, \ldots, A_m \}$ be a collection of disjoint subsets of $G$.  For $\delta \in G^*$ and $i\in \{1,2,\dotsc, m\}$ define 
$$N_i(\delta)=\big|\{(a_i,a_j)|a_i\in A_i,\ a_j\in A_j\ (j\neq i), a_i-a_j=\delta\}\big|.$$  
\end{definition}
An AMD code can be described as a game between an {\em encoder} and an {\em adversary}, who is trying to cheat the encoder.
\begin{definition}
A weak $(n,m)$-AMD code is a collection of disjoint subsets $A_1,A_2,\dotsc, A_m$ of an abelian group $G$ with order $n$. Let $k_i$ denote the size of $A_i$, and let $\sum_{i=1}^m k_i=T$.  
\begin{itemize}
\item The encoder picks a source $i$ (number from $1$ to $m$) uniformly at random, and then independently picks an element $g$ uniformly from the set $A_i$.
\item The adversary chooses a value $\delta\in G^*$, and ``succeeds'' if $g+\delta\in A_j$ for $j\neq i$.
\end{itemize}
\end{definition}
Informally speaking, the adversary wins if they can trick the encoder by shifting the group element $g$ into an element $g+\delta$ that is an encoding of a different source than the one that gave rise to the choice of $g$.  A weak $(n,m)$-AMD code is said to be a weak $(n,m,\hat{\epsilon})$-AMD code if $\hat{\epsilon}$ is an upper bound on the success probability of the adversary.  For a given weak AMD code, we observe that the probability that the adversary succeeds when they pick the group element $\delta$ is:
\begin{align}\label{eq:edelta}
e_{\delta}=\frac{1}{m}\left(\frac{1}{k_1}N_1(\delta)+\frac{1}{k_2}N_2(\delta)+\dotsb+\frac{1}{k_m}N_m(\delta)\right).
\end{align}
This expression arises from the fact that a source $i$ is picked with probability $1/m$, and then $N_i(\delta)$ out of the possible $k_i$ encodings of that source will lead to success for an adversary who picks the group element $\delta$.  The overall probability that an adversary succeeds is therefore at most
\begin{align}
\hat{e}=\max_{\delta\in G^*}\frac{1}{m}\left(\frac{1}{k_1}N_1(\delta)+\frac{1}{k_2}N_2(\delta)+\dotsb+\frac{1}{k_m}N_m(\delta)\right),\label{eq:epsilonhat}
\end{align}
and so the AMD code is a weak $(n,m,\hat{\epsilon})$-AMD code for the value of $\hat{\epsilon}$ given in \eqref{eq:epsilonhat}.

In order to obtain lower bounds on $\hat{\epsilon}$ for $(n,m)$-AMD codes, Paterson and Stinson considered the success probability of an attacker who chooses $\delta$ uniformly at random from $G^\ast$\cite{PatSti}.  The success of such an attacker can be determined by computing the average of $\epsilon_{\delta}$ over all choices of $\delta\in G^\ast$:

\begin{align}
\overline{e_{\delta}}&=\frac{1}{n-1} \sum_{\delta\in G^\ast} \frac{1}{m}\left(\frac{1}{k_1}N_1(\delta)+\frac{1}{k_2}N_2(\delta)+\dotsb+\frac{1}{k_m}N_m(\delta)\right),\nonumber\\
&=\frac{1}{m(n-1)}\left(\frac{1}{k_1}\sum_{\delta\in G^\ast}N_1(\delta)+\frac{1}{k_2}\sum_{\delta\in G^\ast}N_2(\delta)+\dotsb+\frac{1}{k_m}\sum_{\delta\in G^\ast}N_m(\delta)\right),\nonumber\\
&=\frac{1}{m(n-1)}\left(\frac{1}{k_1}k_1\sum_{i\neq 1}k_i+\frac{1}{k_2}\sum_{i\neq 2}k_i+\dotsb+\frac{1}{k_m}\sum_{i\neq m}k_i\right),\nonumber\\
&=\frac{1}{m(n-1)}\left(m\sum_{i=1}^m k_i -\sum_{i=1}^m k_i \right),\nonumber\\
&=\frac{(m-1)\sum_{i=1}^n k_i}{m(n-1)}.\label{eq:rbound}
\end{align}

If we set $T=\sum_{i=1}^m k_i$ then the expression in \eqref{eq:rbound} gives the following lower bound for $\hat{\epsilon}$:
\begin{align}\label{eq:actualrbound}
\hat{\epsilon}\geq \frac{(m-1)T}{m(n-1)}.
\end{align}

Paterson and Stinson refer to \eqref{eq:actualrbound} as the {\em random bound,} or {\em R-bound},  and refer to a weak AMD code for which this bound is tight as an {\em R-optimal} weak AMD code.  A weak AMD code is R-optimal precisely when the maximum success probability the adversary has over all possible $\delta\in G^\ast$ is equal to their average success probability.  This gives rise to the following observation:

\begin{theorem}\label{thm:Roptimal}
A weak AMD code is R-optimal if and only if $e_\delta$ is constant for all $\delta\in G^\ast$.
\end{theorem}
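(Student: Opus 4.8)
The plan is to reduce the statement to the elementary fact that the maximum of a finite collection of real numbers equals their average precisely when all the numbers are equal. First I would unwind what R-optimality means in terms of quantities already computed in the excerpt. By definition the code is R-optimal when the R-bound \eqref{eq:actualrbound} is met with equality, that is, when $\hat{\epsilon} = \frac{(m-1)T}{m(n-1)}$. The sharpest valid choice of $\hat{\epsilon}$ is $\hat{e} = \max_{\delta \in G^*} e_\delta$ from \eqref{eq:epsilonhat}, so R-optimality is the assertion
$$\hat{e} = \frac{(m-1)T}{m(n-1)}.$$
On the other hand, the averaging computation leading to \eqref{eq:rbound} shows that the mean value $\overline{e_\delta} = \frac{1}{n-1}\sum_{\delta \in G^*} e_\delta$ equals exactly $\frac{(m-1)T}{m(n-1)}$. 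Hence R-optimality is equivalent to the single equality $\hat{e} = \overline{e_\delta}$: the maximum of the $n-1$ values $e_\delta$ coincides with their average.

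With this reformulation in hand, the two implications follow at once. For the reverse direction, if $e_\delta$ is constant, say $e_\delta = c$ for every $\delta \in G^*$, then both the maximum and the average of these values equal $c$, whence $\hat{e} = \overline{e_\delta}$ and the code is R-optimal. For the forward direction, I would invoke the standard inequality
$$\overline{e_\delta} = \frac{1}{n-1}\sum_{\delta \in G^*} e_\delta \;\leq\; \frac{1}{n-1}\sum_{\delta \in G^*} \hat{e} \;=\; \hat{e},$$
in which equality holds if and only if every summand satisfies $e_\delta = \hat{e}$. Thus $\hat{e} = \overline{e_\delta}$ forces $e_\delta = \hat{e}$ for all $\delta \in G^*$, so $e_\delta$ is constant.

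Since the argument rests only on the max-average inequality together with the prior computation of $\overline{e_\delta}$, I do not expect any genuine obstacle. The one point that needs care is the translation step: confirming that the value $\frac{(m-1)T}{m(n-1)}$ appearing in the R-bound is \emph{literally} the average $\overline{e_\delta}$, rather than merely a lower bound for it, so that R-optimality genuinely coincides with the max-equals-average condition and not some weaker statement. This is guaranteed by the chain of equalities in \eqref{eq:rbound}, so once that identification is made explicit the proof is complete.
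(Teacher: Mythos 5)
Your proposal is correct and follows essentially the same route as the paper: the paper offers no separate proof, presenting the theorem as an immediate observation from the preceding discussion, namely that R-optimality means the maximum of the $e_\delta$ equals their average $\overline{e_\delta}=\frac{(m-1)T}{m(n-1)}$ computed in \eqref{eq:rbound}, which for finitely many reals holds precisely when all $e_\delta$ coincide. Your write-up simply makes that max-equals-average argument explicit, including the key identification that the R-bound is literally the average and not just a lower bound.
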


In what follows, we will obtain a combinatorial characterization of codes that are optimal in this sense.  Recall the following definition (introduced in \cite{Oga}):
\begin{definition}
An $(n,m,k,\lambda)$-EDF is a set of $m$ disjoint $k$-subsets $A_1,A_2,\dotsc,A_m$ of an abelian group $G$ of order $n$ with the property that 
\begin{align*}
N_1(\delta)+N_2(\delta)+\dotsb+N_m(\delta)=\lambda
\end{align*}
for all $\delta\in G^*$.
\end{definition}

Further definitions were introduced in \cite{PatSti}:
\begin{definition}
\begin{itemize}
\item An {\em $(n,m,k,\lambda)$-SEDF} is an EDF that satisfies the stronger property that
\begin{align*}
N_i(\delta)=\lambda
\end{align*}
for all $i$ from $1,\dotsc,m$ and all $\delta\in G^*$.  In particular, it is an $(n,m,k,m\lambda)$-EDF.
\item An {\em $(n,m;k_1,\dotsc,k_m;\lambda_1,\dotsc,\lambda_m)$-GSEDF} (generalised strong EDF) is a set of $m$ disjoint subsets $A_1,A_2,\dotsc,A_m$ of an abelian group $G$ of order $n$ such that $|A_i|=k_i$ for $1 \leq i \leq m$, and such that
\begin{align*}
N_i(\delta)=\lambda_i
\end{align*}
for all $\delta\in G^*$ and $1\leq i\leq m$.
\end{itemize}
\end{definition}

All of these structures - EDFs, SEDFs and GSEDFs -  have been investigated in the literature because they provide examples of R-optimal AMD codes.  They are R-optimal because the conditions imposed on $N_i(\delta)$ in each definition lead to a constant value of $e_{\delta}$ in Theorem \ref{thm:Roptimal}.  However, we may consider a more general class of combinatorial structure which guarantees R-optimality and allows the potential for new types of code not already covered by the existing, more specialised, definitions.

We begin by making the following new definition:
\begin{definition}
Let $w_1,w_2,\dotsc,w_m$ be {\em weights} that satisfy $0<w_i\leq 1 $, $w_i\in \mathbb{Q}$ for $i\in\{1,\dotsc,m\}$. An $(n,m;k_1,k_2,\dotsc,k_m;w_1,w_2\dotsc,w_m;\ell)$-{\em weighted EDF} (WEDF) is a collection of disjoint subsets $A_1,A_2,\dotsc, A_m$ of an abelian group $G$ with order $n$, where $|A_i|=k_i$ for $i\in\{1,2,\dotsc,m\}$, with the property that 
\begin{align*}
{w_1}N_1(\delta)+{w_2}N_2(\delta)+\dotsb+
{w_m}N_m(\delta)=\ell
\end{align*}
for all $\delta\in G^*$. (Note that $\ell$ is a rational number which need not be an integer.)
\end{definition}

\begin{example}\label{ex:z8}
Consider the subsets $A_1=\{0,1,3\}$, $A_2=\{4,5,7\}$ and $A_3=\{2,6\}$ in $G=\mathbb{Z}_8$.  For $\delta=4$, we have $N_1(4)=N_2(4)=3$ while $N_3(4)=0$.  For any $\delta \in G^* \setminus \{4\}$, $N_1(\delta)=N_2(\delta)=N_3(\delta)=2$.  We observe that $\frac{1}{2} N_1(4)+\frac{1}{2} N_2(4)+ \frac{1}{2} N_3(4)=3.\frac{1}{2}+3.\frac{1}{2}+0.\frac{1}{2}=3$, while for any $\delta \in G^* \setminus \{4\}$ we have
 $\frac{1}{2} N_1(\delta)+\frac{1}{2} N_2(\delta)+ \frac{1}{2} N_3(\delta)=2.\frac{1}{2}+2.\frac{1}{2}+2.\frac{1}{2}=3$.  Hence these subsets form a  $(8,3;3,3,2;\frac{1}{2},\frac{1}{2}, \frac{1}{2};3)$-WEDF.
 \end{example}

\begin{example}
\begin{itemize}
\item  An $(n,m,k,\lambda)$-EDF is an $(n,m; k, \ldots, k; w, \ldots, w; \lambda w)$-WEDF for any choice of weight $w$.
\item An $(n,m,k,\lambda)$-SEDF is an $(n,m;k,\dotsc,k;w_1,w_2,\dotsc,w_m;\ell)$-WEDF for any choice of weights $w_1,w_2,\dotsc,w_m$; here $\ell=\lambda\sum_{i=1}^m w_i$.
\item An $(n,m; k_1, \ldots, k_m; \lambda_1, \ldots, \lambda_m)$-GSEDF is an $(n,m; k_1, \ldots, k_m; w_1, \ldots, w_m; \ell)$-WEDF for any choice of weights $w_1, \ldots, w_m$; here $\ell=\sum_{i=1}^m w_i \lambda_i$.
\end{itemize}
\end{example}

Motivated by a desire to study R-optimal AMD codes, we are particularly interested in the following special case:
\begin{definition}\label{def:RWEDF}
An $(n,m;k_1,\dotsc,k_m;\ell)$-{\em reciprocally weighted EDF} (RWEDF) is an\\ \mbox{$(n,m;k_1,k_2,\dotsc,k_m;w_1,w_2\dotsc,w_m;\ell)$}-WEDF in which the weights $w_i$ are given by $w_i=1/k_i$ for each $i$, so
\begin{align*}
\ell=\frac{1}{k_1}N_1(\delta)+\frac{1}{k_2}N_2(\delta)+\dotsb+
\frac{1}{k_m}N_m(\delta)
\end{align*}
 for each $\delta\in G^*$.
\end{definition}
When viewed as an AMD code, an RWEDF satisfies $e_\delta=\ell/m$ for any $\delta\in G^*$.  It follows that an RWEDF is an R-optimal AMD code. In fact, 

\begin{theorem}
An AMD code is R-optimal {\em precisely} when it is an RWEDF.
\end{theorem}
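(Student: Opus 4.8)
The plan is to reduce the statement entirely to Theorem~\ref{thm:Roptimal} together with Definition~\ref{def:RWEDF}, since between them these already contain almost everything needed. The key observation is that the defining quantity of an RWEDF, namely $\sum_{i=1}^m \tfrac{1}{k_i}N_i(\delta)$, differs from the adversary's per-$\delta$ success probability $e_\delta$ of \eqref{eq:edelta} only by the fixed multiplicative factor $1/m$. Because $m$ does not depend on $\delta$, the function $\delta\mapsto e_\delta$ is constant on $G^*$ if and only if the sum $\sum_{i=1}^m \tfrac{1}{k_i}N_i(\delta)$ is constant on $G^*$. This single equivalence is the engine of the whole proof.

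First I would establish the forward implication. Suppose the AMD code is R-optimal. By Theorem~\ref{thm:Roptimal} this means $e_\delta$ takes a common value, say $e_\delta = c$, for every $\delta\in G^*$. Substituting into \eqref{eq:edelta} and clearing the factor $1/m$ yields $\sum_{i=1}^m \tfrac{1}{k_i}N_i(\delta) = mc$ for all $\delta\in G^*$. Setting $\ell = mc$, the collection $A_1,\dots,A_m$ satisfies exactly the condition of Definition~\ref{def:RWEDF} with weights $w_i = 1/k_i$, and hence is an $(n,m;k_1,\dots,k_m;\ell)$-RWEDF.

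For the reverse implication I would run the argument backwards, as already indicated in the remark preceding the statement. If the code is an RWEDF then $\sum_{i=1}^m \tfrac{1}{k_i}N_i(\delta) = \ell$ for all $\delta\in G^*$, whence $e_\delta = \ell/m$ is independent of $\delta$; Theorem~\ref{thm:Roptimal} then delivers R-optimality. Thus the two notions coincide.

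I do not anticipate any genuine obstacle: the result is essentially a dictionary translation between the analytic characterization of R-optimality (constancy of $e_\delta$) and the combinatorial RWEDF condition. The only point warranting a moment's care is confirming that the constant factor $1/m$ may be absorbed freely in both directions—so that constancy of $e_\delta$ and constancy of the reciprocally-weighted sum are genuinely equivalent, and that the resulting parameter $\ell = mc$ is admissible—but this is immediate since $m, k_1,\dots,k_m$ are positive integers fixed by the code and $\ell$ is permitted to be any rational number.
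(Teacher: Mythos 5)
Your proposal is correct and follows essentially the same route as the paper: the paper's own proof is the one-line observation that the result follows immediately from Theorem~\ref{thm:Roptimal} together with the formula for $e_\delta$, which is exactly the equivalence you spell out. Your added care about absorbing the factor $1/m$ and the rationality of $\ell = mc$ is fine but does not change the substance.
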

\begin{proof}
This follows immediately from Theorem~\ref{thm:Roptimal} and Equation~\eqref{eq:actualrbound}.
\end{proof}

We exhibit some known examples of RWEDFs:
\begin{example}\noindent \label{RWEDFex}
\begin{itemize}
\item Two RWEDFs that always exist for any group $G$ are the $(n,1;n;0)$-RWEDF consisting of the whole group, and the $(n,n;1,\dotsc,1;n)$-RWEDF comprising all singletons.  We refer to these as {\em trivial RWEDFs}.
\item For a group $G$, its non-zero elements, taken as singletons, form an $(n,n-1;1, \ldots, 1; n-2)$-RWEDF.
\item An $(n,m,k,\lambda)$-EDF can be viewed as an $(n,m;k,k\dotsc,k;\frac{\lambda}{k})$-RWEDF.
\item An $(n,m,k,\lambda)$-SEDF is an $(n,m;k,k\dotsc,k;\frac{m\lambda}{k})$-RWEDF.
\item An $(n,m; k_1, \ldots, k_m; \lambda_1, \ldots, \lambda_m)$-GSEDF is an $(n,m; k_1, \ldots, k_m; \sum_{i=1}^m \frac{\lambda_i}{k_i})$-RWEDF.
\end{itemize}
\end{example}

\begin{example}\label{ex:z10}\cite{PatSti}
Consider the subsets $A_1=\{0\}$, $A_2=\{5\}$, $A_3=\{1,9\}$, $A_4=\{2,3\}$ in $\mathbb{Z}_{10}$.  We observe that $N_1(5)=N_2(5)=1$ and $N_3(5)=N_4(5)=0$, so $N_1(5)+N_2(5)+\frac{1}{2}N_3(5)+\frac{1}{2}N_4(5)=2$.  For $\delta=2$ we have $N_1(2)=0$, $N_2(2)=1$, $N_3(2)=0$ and $N_4(2)=2$, so $N_1(2)+N_2(2)+\frac{1}{2}N_3(2)+\frac{1}{2}N_4(2)=2$.  Repeating these calculations for the remaining values of $\delta$ will show that these subsets form a $(10,4;1,1,2,2;2)$-RWEDF.  Observe that this is not an EDF, SEDF nor GSEDF.
\end{example}

\begin{remark}
In the literature, AMD codes and difference families have traditionally been defined in the context of an abelian group $G$.  However, all of the definitions stated above (for EDF, WEDF and RWEDF) remain valid when $G$ is an arbitrary finite group $G$, not necessarily abelian.  Although we shall generally focus on the traditional setting where $G$ is abelian, we shall allow the concept of RWEDF to be meaningful for non-abelian $G$, and at certain points in the paper we shall consider existence and constructions of RWEDFs in non-abelian groups.
\end{remark}

\section{Basic results on RWEDFs}
In this section, we summarize basic results that the parameters of any RWEDF must fulfil.
As usual, let $T=\sum_{i=1}^m k_i$.
\begin{theorem}\label{thm:params}
The parameters of an $(n,m;k_1,\dotsc,k_m;\ell)$-RWEDF satisfy
\begin{align}\label{eq:ellsize}(n-1)\ell=(m-1)T.
\end{align} 
\end{theorem}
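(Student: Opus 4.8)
The plan is to sum the defining relation of the RWEDF over all non-zero $\delta$ and evaluate both sides. Since the RWEDF condition gives $\frac{1}{k_1}N_1(\delta)+\dotsb+\frac{1}{k_m}N_m(\delta)=\ell$ for every $\delta\in G^*$, and there are exactly $n-1$ such elements, summing the left-hand side immediately yields $(n-1)\ell$. The whole argument then reduces to computing the sum of the right-hand side, i.e.\ to evaluating $\sum_{\delta\in G^*}N_i(\delta)$ for each $i$.

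First I would interchange the order of summation, writing $\sum_{\delta\in G^*}\sum_{i=1}^m\frac{1}{k_i}N_i(\delta)=\sum_{i=1}^m\frac{1}{k_i}\sum_{\delta\in G^*}N_i(\delta)$. The key step is then to observe that $\sum_{\delta\in G^*}N_i(\delta)$ counts all pairs $(a_i,a_j)$ with $a_i\in A_i$, $a_j\in A_j$ for some $j\neq i$, and $a_i-a_j$ non-zero. Here I would use the crucial fact that the subsets $A_1,\dotsc,A_m$ are disjoint: any $a_i\in A_i$ and $a_j\in A_j$ with $j\neq i$ are automatically distinct, so the condition $a_i-a_j\neq 0$ is vacuous and no pairs are lost by restricting to $G^*$. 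Hence $\sum_{\delta\in G^*}N_i(\delta)=|A_i|\sum_{j\neq i}|A_j|=k_i(T-k_i)$.

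Substituting this back, the right-hand side collapses: $\sum_{i=1}^m\frac{1}{k_i}\cdot k_i(T-k_i)=\sum_{i=1}^m(T-k_i)=mT-T=(m-1)T$. Equating the two evaluations gives $(n-1)\ell=(m-1)T$, as required.

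I do not expect any serious obstacle; the only subtlety worth flagging is the disjointness observation, which is precisely what guarantees that restricting the difference $\delta$ to $G^*$ rather than all of $G$ loses nothing. This computation is in fact the same averaging argument already carried out in the derivation of \eqref{eq:rbound}, now read in reverse so as to extract a constraint on the parameters rather than a bound on the adversary's success probability.
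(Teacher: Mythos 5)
Your proof is correct and follows essentially the same route as the paper's: summing the defining relation over $\delta\in G^*$, interchanging sums, and evaluating $\sum_{\delta\in G^*}N_i(\delta)=k_i(T-k_i)$ by counting ordered pairs across distinct sets. Your explicit remark that disjointness makes the restriction to $G^*$ lossless is a detail the paper leaves implicit, but it is the same argument.
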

\begin{proof}
We observe that the number of ways of choosing a pair $(a_i,a_j)$ with $a_i\in A_i$ and $a_j\in A_j$ for some $j\neq i$ is $k_i(T-k_i)$.  Hence, for any $i$, the sum $\sum_{\delta\in G^*} N_i(\delta)$ is equal to $k_i(T-k_i)$.
Now, for each $\delta\in G^*$ we have
\begin{align*}
\ell&=\frac{1}{k_1}N_1(\delta)+\frac{1}{k_2}N_2(\delta)+\dotsb+
\frac{1}{k_m}N_m(\delta)\intertext{so}
(n-1)\ell&=\sum_{\delta\in G^*}\left(\frac{1}{k_1}N_1(\delta)+\frac{1}{k_2}N_2(\delta)+\dotsb+
\frac{1}{k_m}N_m(\delta)\right),\\ 
&=\frac{1}{k_1}\sum_{\delta\in G^*}N_1(\delta)+\frac{1}{k_2}\sum_{\delta\in G^*}N_2(\delta)+\dotsb+\frac{1}{k_m}\sum_{\delta\in G^*}N_m(\delta),\\ 
&=(T-k_1)+(T-k_2)+\dotsb+(T-k_m),\\
&=(m-1)T.
\end{align*}

\end{proof}
From this we derive the following corollary:
\begin{corollary}\label{l<m}
For a nontrivial $(n,m;k_1,\dotsc,k_m;\ell)$-RWEDF we have $\ell<m$, and if $\ell$ is an integer then $\ell\leq m-1$.
\end{corollary}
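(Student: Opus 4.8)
The plan is to feed the identity from Theorem~\ref{thm:params} into the inequality we want to establish. Writing $(n-1)\ell=(m-1)T$ as $\ell=\frac{(m-1)T}{n-1}$, the claim $\ell<m$ becomes, after clearing the positive denominator $n-1$, the equivalent integer inequality $(m-1)T<m(n-1)$. So everything reduces to proving this one inequality.

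Two elementary facts follow from the subsets $A_1,\dots,A_m$ being disjoint and nonempty inside a group of order $n$: their sizes satisfy $T=\sum_i k_i\le n$, and there can be at most $n$ of them, so $m\le n$. The key step is to upgrade the latter into the strict inequality $m<n$ for a nontrivial RWEDF. Here I would argue that $m=n$ can occur only in the trivial case: if $m=n$, then $n$ disjoint nonempty subsets already account for at least $n$ elements, so $T\ge n$; combined with $T\le n$ this forces $T=n$ and hence $k_i=1$ for every $i$, i.e.\ the all-singletons configuration, which is trivial. Therefore a nontrivial RWEDF has $m\le n-1$, and in particular $m<n$.

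With $m<n$ in hand the remainder is a one-line estimate: since $m-1\ge 0$ and $T\le n$ we get $(m-1)T\le (m-1)n$, while $(m-1)n<m(n-1)$ is precisely the statement $m<n$. Chaining these gives $(m-1)T<m(n-1)$, hence $\ell<m$. For the integral refinement, if $\ell$ happens to be an integer then $\ell<m$ together with $m\in\mathbb{Z}$ immediately yields $\ell\le m-1$.

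The only non-routine point is the middle paragraph: translating the combinatorial meaning of \emph{nontrivial} into the numerical gain $m<n$. Everything else is bookkeeping with the linear relation of Theorem~\ref{thm:params}. I expect no further obstacles, since once $m<n$ is secured the inequality is forced by the crude bound $T\le n$.
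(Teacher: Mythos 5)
Your proposal is correct and follows essentially the same route as the paper: both feed the identity $(n-1)\ell=(m-1)T$ from Theorem~\ref{thm:params} into the bound $T\leq n$ and then use $m<n$ to get strictness. In fact you are slightly more careful than the paper, whose displayed chain stops at $\ell\leq (m-1)+\frac{m-1}{n-1}$ and leaves implicit the step you spell out — that nontriviality rules out $m=n$ (the all-singletons partition), so that $\frac{m-1}{n-1}<1$.
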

\begin{proof}
We observe that $T\leq n$, so
\begin{align*}
\ell&=\frac{(m-1)T}{n-1},\\
&\leq \frac{(m-1)n}{n-1},\\
&\leq (m-1)\left(1+\frac{1}{n-1}\right),\\
&\leq (m-1)+\frac{m-1}{n-1}.
\end{align*}
\end{proof}

\begin{lemma}\label{k_i}
For a non-trivial  $(n,m;k_1,\dotsc,k_m;\ell)$-RWEDF,
\begin{enumerate}
\item[(i)] for any $\delta\in G^*$ and any $i$ from $1$ to $m$ we have $N_i(\delta)\leq \min(k_i,T-k_i)$;
\item[(ii)] the number of $\delta\in G^*$ for which $N_i(\delta)\neq 0$ is at least $\max(k_i,T-k_i)$.
\end{enumerate}
\end{lemma}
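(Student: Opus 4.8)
The plan is to treat the two parts in sequence, deriving (i) by an elementary double-counting argument and then feeding it into (ii) together with the global count already computed in the proof of Theorem~\ref{thm:params}.

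For part (i), I would prove the two bounds $N_i(\delta)\le k_i$ and $N_i(\delta)\le T-k_i$ separately and then combine them. Each pair $(a_i,a_j)$ contributing to $N_i(\delta)$ satisfies $a_i-a_j=\delta$, so the two coordinates determine one another. Fixing the first coordinate $a_i\in A_i$ forces $a_j=a_i-\delta$, whence there are at most $|A_i|=k_i$ such pairs. Symmetrically, the second coordinate $a_j$ ranges over $\bigcup_{j\neq i}A_j$, a set of size $T-k_i$, and fixing $a_j$ forces $a_i=a_j+\delta$, giving at most $T-k_i$ pairs. Taking the smaller of the two bounds yields $N_i(\delta)\le\min(k_i,T-k_i)$.

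For part (ii), I would write $S_i=\{\delta\in G^*:N_i(\delta)\neq 0\}$ for the support of $N_i$ and invoke the identity $\sum_{\delta\in G^*}N_i(\delta)=k_i(T-k_i)$ established during the proof of Theorem~\ref{thm:params}. Since this sum is concentrated on $S_i$ and each term is at most $\min(k_i,T-k_i)$ by part (i), we obtain
\[
k_i(T-k_i)=\sum_{\delta\in S_i}N_i(\delta)\le |S_i|\cdot\min(k_i,T-k_i).
\]
Dividing through gives $|S_i|\ge k_i(T-k_i)/\min(k_i,T-k_i)$, and the right-hand side simplifies to $\max(k_i,T-k_i)$ using the elementary identity $xy/\min(x,y)=\max(x,y)$ (checking the case $k_i\le T-k_i$ and the case $k_i>T-k_i$).

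I do not anticipate a serious obstacle: part (i) is pure counting and part (ii) is a one-line averaging bound that exploits the upper bound from (i) against the known total. The only place demanding a moment's care is the final simplification, where one must confirm that $k_i(T-k_i)/\min(k_i,T-k_i)$ collapses to $\max(k_i,T-k_i)$ in both regimes; this is immediate but worth stating explicitly so that the conclusion matches the claimed $\max$ rather than the $\min$.
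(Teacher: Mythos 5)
Your proof is correct. Part (i) is essentially the paper's own argument in different clothing: the paper forms the $(T-k_i)\times k_i$ subtraction table whose columns are indexed by $A_i$ and rows by $B_i=\bigcup_{j\neq i}A_j$, and observes that the entries in each row are distinct, as are those in each column --- exactly your observation that fixing either coordinate of a pair $(a_i,a_j)$ with $a_i-a_j=\delta$ determines the other. For part (ii), however, you take a genuinely different route. The paper reads (ii) directly off the same table: a single column already exhibits $T-k_i$ distinct differences (each with $N_i\neq 0$), and a single row exhibits $k_i$ distinct ones, so the support of $N_i$ has size at least $\max(k_i,T-k_i)$, with no counting of the total required. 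You instead run an averaging argument: the support size times the pointwise bound $\min(k_i,T-k_i)$ from (i) must cover the total $\sum_{\delta\in G^*}N_i(\delta)=k_i(T-k_i)$, and $k_i(T-k_i)/\min(k_i,T-k_i)=\max(k_i,T-k_i)$. Both are sound; the paper's is more elementary (it never invokes the sum identity from Theorem~\ref{thm:params}), while yours has the small bonus of showing that equality in (ii) would force $N_i(\delta)=\min(k_i,T-k_i)$ for every $\delta$ in the support. One point worth stating explicitly in your write-up: dividing by $\min(k_i,T-k_i)$ requires it to be nonzero, i.e.\ $k_i\geq 1$ and $T-k_i\geq 1$, which is precisely what the non-triviality hypothesis (in particular $m\geq 2$) supplies.
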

\begin{proof}
Let $A$ be the $(T-k_i)\times k_i$ array with columns indexed by the elements of $A_i$ and rows indexed by the elements of $\bigcup_{j\neq i} A_j$ where each cell entry is given by the difference between the column label and the row label (i.e. the subtraction table).  Results (i) and (ii) follow immediately from the observation that the entries in each row are distinct, as are the entries in each column. 
\end{proof}

\section{RWEDFs with $m=2$}\label{m=2}
We begin by resolving the situation for RWEDFs with $m=2$; it turns out that these are familiar combinatorial objects.  If $|G|=2$, the situation is trivial; we therefore assume $n>2$.  

By Theorem \ref{thm:params}, an RWEDF with $m=2$ must satisfy $\ell=\frac{T}{n-1}$, where $T=k_1+k_2$. In particular, since $T \leq n$, the only possibility for $\ell \in \mathbb{Z}$ is when $T=n-1$, i.e. when the RWEDF partitions all-but-one of the elements of $G$.  In this case $\ell=1$.

\begin{theorem}\label{m=2RWEDF}
An $(n,m; k_1, \ldots, k_m; \ell)$-RWEDF with $m=2$ is either an EDF or a GSEDF.  \\
Specifically, an $(n,2; k_1,k_2; \frac{k_1+k_2}{n-1})$-RWEDF is an $(n,2,k; \frac{2k^2}{n-1})$-EDF or it is an $(n, 2, k_1, k_2; \frac{k_1 k_2}{n-1})$-GSEDF. 
\end{theorem}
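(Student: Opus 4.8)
The plan is to exploit the symmetry that is special to $m=2$, namely a relation between $N_1$ and $N_2$ obtained by negating the difference. First I would observe that for two blocks, $N_2(\delta) = N_1(-\delta)$ for every $\delta \in G^*$: a pair $(a_2, a_1)$ with $a_2 \in A_2$, $a_1 \in A_1$ and $a_2 - a_1 = \delta$ is precisely a pair with $a_1 - a_2 = -\delta$, which is what $N_1(-\delta)$ counts. With this, the defining RWEDF equation reads $\frac{1}{k_1} N_1(\delta) + \frac{1}{k_2} N_1(-\delta) = \ell$ for all $\delta \in G^*$, and by Theorem~\ref{thm:params} we already know $\ell = \frac{k_1 + k_2}{n-1}$. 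The argument then splits according to whether the block sizes coincide.

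If $k_1 = k_2 = k$, the equation collapses to $N_1(\delta) + N_2(\delta) = k\ell$ for all $\delta \in G^*$, which is exactly the defining condition of an $(n,2,k,\lambda)$-EDF with $\lambda = k\ell = \frac{2k^2}{n-1}$; no further work is needed here.

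The more interesting case is $k_1 \neq k_2$. Here I would write the defining equation both for $\delta$ and for $-\delta$, obtaining $\frac{1}{k_1} N_1(\delta) + \frac{1}{k_2} N_1(-\delta) = \ell$ and $\frac{1}{k_1} N_1(-\delta) + \frac{1}{k_2} N_1(\delta) = \ell$. Subtracting these gives $\left(\frac{1}{k_1} - \frac{1}{k_2}\right)\left(N_1(\delta) - N_1(-\delta)\right) = 0$; since $k_1 \neq k_2$ forces $\frac{1}{k_1} - \frac{1}{k_2} \neq 0$, we conclude $N_1(\delta) = N_1(-\delta)$, equivalently $N_1(\delta) = N_2(\delta)$, for every $\delta$. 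Feeding this back into the defining equation yields $\left(\frac{1}{k_1} + \frac{1}{k_2}\right) N_1(\delta) = \ell$, so $N_1(\delta)$ is constant in $\delta$, hence so is $N_2(\delta)$; this is precisely the GSEDF condition. A final check using the counting identity $\sum_{\delta\in G^*} N_i(\delta) = k_i(T - k_i) = k_1 k_2$ from the proof of Theorem~\ref{thm:params} pins down the constant value as $\frac{k_1 k_2}{n-1}$, giving the claimed $(n,2,k_1,k_2;\frac{k_1 k_2}{n-1})$-GSEDF.

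I do not expect any serious obstacle: the whole argument rests on spotting the $m=2$ symmetry $N_2(\delta) = N_1(-\delta)$ and the clean case distinction it produces. The one point worth care is that in the unequal-block case the argument actually forces $\lambda_1 = \lambda_2$, so the resulting object has equal difference-counts even though the blocks have different sizes (and is therefore genuinely not an SEDF). I would make sure to record that the two $N_i$ coincide, not merely that each is separately constant, so that the identification of the parameter $\frac{k_1 k_2}{n-1}$ is consistent.
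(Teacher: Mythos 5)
Your proposal is correct and follows essentially the same route as the paper's proof: the same symmetry $N_2(\delta)=N_1(-\delta)$, the same case split on $k_1=k_2$ versus $k_1\neq k_2$, and the same subtraction argument forcing $N_1(\delta)=N_1(-\delta)=N_2(\delta)$. The only cosmetic difference is that you pin down the constant $\frac{k_1k_2}{n-1}$ via the counting identity $\sum_{\delta\in G^*}N_1(\delta)=k_1k_2$, whereas the paper substitutes $\ell=\frac{k_1+k_2}{n-1}$ directly into $\left(\frac{1}{k_1}+\frac{1}{k_2}\right)N_1(\delta)=\ell$; both are immediate consequences of Theorem~\ref{thm:params}.
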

\begin{proof}
From the discussion above, $\ell=\frac{k_1 + k_2}{n-1}$.

If $k_1=k_2=k$, then the RWEDF is an $(n,2,k,k\ell)$-EDF.  In this case, $\ell=\frac{2k}{n-1}$, so $k \ell=\frac{2k^2}{n-1}$.  Since by definition $k \ell$ must be a integer, $n-1$ must divide $2k^2$.

We now assume $k_1\neq k_2$.  We observe that whenever $\delta$ occurs as a difference of the form $a_2-a_1$ with $a_1\in A_1$ and $a_2\in A_2$ then $-\delta$ occurs as the difference $a_1-a_2$.  It follows that $N_2(\delta)=N_1(-\delta)$ for all $\delta\in G^*$.  We have that
\begin{align*}
\ell&=\frac{1}{k_1}N_1(\delta)+\frac{1}{k_2}N_2(\delta),\\
&=\frac{1}{k_1}N_1(\delta)+\frac{1}{k_2}N_1(-\delta).  \intertext{Replacing $\delta$ by $-\delta$ in the above argument gives}
\ell&=\frac{1}{k_1}N_1(-\delta)+\frac{1}{k_2}N_1(\delta), \intertext{so}
\left(\frac{1}{k_1}-\frac{1}{k_2}\right)N_1(\delta)&=\left(\frac{1}{k_1}-\frac{1}{k_2}\right)N_1(-\delta)
\end{align*}
for all $\delta\in G^{*}$.
Since $k_1\neq k_2$ this implies $N_1(\delta)=N_1(-\delta)=N_2(\delta)$.  This implies that
\begin{align*}
\ell&=\left(\frac{1}{k_1}+\frac{1}{k_2}\right)N_1(\delta),\intertext{and hence for any $\delta$}
N_1(\delta)&=\frac{\ell k_1 k_2}{k_1+k_2},\\
&=\frac{k_1k_2}{n-1}
\end{align*}
by Theorem~\ref{thm:params}.  The same is true for $N_2(\delta)$.\\ 
Hence in this case, the RWEDF is an $\left(n,2;k_1,k_2;\frac{k_1k_2}{n-1},\frac{k_1k_2}{n-1}\right)$-GSEDF.

By Example \ref{RWEDFex}, any $(n,2,k; \frac{2k^2}{n-1})$-EDF or $\left(n,2;k_1,k_2;\frac{k_1k_2}{n-1},\frac{k_1k_2}{n-1}\right)$-GSEDF is an RWEDF with $m=2$.
\end{proof}

EDFs have been studied for some time and various constructions are known; recently, GSEDFs have also received attention, for example in  \cite{LuNiuCao} and \cite{WenEtAl}.  In \cite{LuNiuCao}, it is shown that any $(n,2;k_1,k_2;\lambda_1, \lambda_2)$-GSEDF must have $\lambda_1=\lambda_2$ ($=\lambda$, say) where $\lambda | k_1 k_2$, and constructions are given for various $(n,2;k_1, k_2; \lambda, \lambda)$ via a recursive technique.  Many of these constructions satisfy $\lambda=\frac{k_1 k_2}{n-1}$ and so provide infinite families of such RWEDFs.

One natural situation to consider is when the elements of an RWEDF partition $G$ or $G^*$.  These cases have been well-studied for GSEDFs and EDFs; see \cite{PatSti}, \cite{WenEtAl} and \cite{LuNiuCao}.  The following theorem summarizes the results for GSEDFs:

\begin{theorem}\label{GSEDFpartition}
Let $G$ be a finite abelian group and let $\mathcal{A}=\{A_1, \ldots, A_m\}$ ($m \geq 2$) be a collection of disjoint subsets of $G$, with sizes $k_1,k_2,\dotsc,k_m$ respectively.  Then
\begin{itemize}
\item if $\mathcal{A}$ partitions $G$, then $\mathcal{A}$ is an $(n,m;k_1, \ldots, k_m: \lambda_1, \ldots, \lambda_m)$-GSEDF  if and only if each $A_i$ is an $(n, k_i, k_i-\lambda_i)$ difference set in $G$;
\item  if $\mathcal{A}$ partitions $G^*$, then $\mathcal{A}$ is an $(n,m;k_1, \ldots, k_m: \lambda_1, \ldots, \lambda_m)$-GSEDF  if and only if each $A_i$ is an $(n, k_i, k_i-\lambda_i-1, k_i- \lambda_i)$ partial difference set in $G$.
\end{itemize}
\end{theorem}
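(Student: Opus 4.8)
The plan is to relate the \emph{external} difference counts $N_i(\delta)$ to the \emph{internal} difference counts of each block, and then to recognise the resulting condition as the defining property of a difference set (respectively a partial difference set). For each $i$, write $D_i(\delta)$ for the number of ordered pairs $(a,a') \in A_i \times A_i$ with $a-a'=\delta$; for $\delta \in G^*$ this is precisely the quantity controlled by the difference-set and partial-difference-set definitions. The engine of the proof is a single counting identity: for any fixed $\delta \in G^*$ and any $a_i \in A_i$ there is a unique $x \in G$ with $a_i - x = \delta$, so there are exactly $k_i$ pairs $(a_i,x) \in A_i \times G$ with $a_i-x=\delta$. I would then split this count according to which part of $G$ the ``target'' $x$ lies in.

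In the first case, when $\mathcal{A}$ partitions $G$, every such $x$ lies in exactly one $A_j$. The pairs with $x \in A_i$ contribute $D_i(\delta)$, and those with $x \in A_j$ for some $j \neq i$ contribute $N_i(\delta)$, giving $D_i(\delta)+N_i(\delta)=k_i$ for all $\delta \in G^*$. Hence the GSEDF condition $N_i(\delta)=\lambda_i$ for all $\delta \in G^*$ holds if and only if $D_i(\delta)=k_i-\lambda_i$ for all $\delta \in G^*$, which is exactly the assertion that $A_i$ is an $(n,k_i,k_i-\lambda_i)$-difference set. Running this equivalence over each $i$ yields the first bullet.

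In the second case, when $\mathcal{A}$ partitions $G^*$, the target $x$ may additionally equal $0$, and $x=0$ occurs precisely when $\delta=a_i$, i.e. exactly when $\delta \in A_i$. The identity therefore becomes $D_i(\delta)+N_i(\delta)+[\delta \in A_i]=k_i$ for all $\delta \in G^*$, where $[\,\cdot\,]$ is the indicator. Consequently $N_i(\delta)=\lambda_i$ for all $\delta \in G^*$ is equivalent to $D_i(\delta)=k_i-\lambda_i-1$ for $\delta \in A_i$ and $D_i(\delta)=k_i-\lambda_i$ for $\delta \in G^* \setminus A_i$. Since $A_i \subseteq G^*$, this split into elements inside and outside $A_i$ is exactly the defining property of an $(n,k_i,k_i-\lambda_i-1,k_i-\lambda_i)$-partial difference set, so the second bullet follows block by block.

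I expect no serious obstacle, since the whole argument is one clean double count. The only points needing care are the bookkeeping of the $x=0$ term in the $G^*$ case and checking that it aligns correctly with the $\lambda$ (inside $A_i$) versus $\mu$ (outside $A_i$) distinction in the partial-difference-set definition. I would also stress that the equivalence is to be read for each $i$ independently, because the GSEDF condition prescribes a single value $\lambda_i$ on each block $A_i$ and the counting identity is established separately for each fixed source set.
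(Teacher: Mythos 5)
Your proof is correct and complete. Note, however, that the paper gives no proof of this theorem at all: it is presented as a summary of known results from the cited literature, so there is no in-paper argument to compare against. Your double count --- for fixed $\delta\in G^*$ and each $a_i\in A_i$ the element $x=a_i-\delta$ is unique, giving $D_i(\delta)+N_i(\delta)=k_i$ when $\mathcal{A}$ partitions $G$, and $D_i(\delta)+N_i(\delta)+[\delta\in A_i]=k_i$ when $\mathcal{A}$ partitions $G^*$ (the extra indicator accounting for $x=0$, which is covered by no block) --- is exactly the standard argument by which these equivalences are established, and your parameter bookkeeping (internal count $k_i-\lambda_i$ everywhere in the difference-set case; $k_i-\lambda_i-1$ on $A_i$ and $k_i-\lambda_i$ on $G^*\setminus A_i$ in the partial-difference-set case) matches the stated parameters precisely.
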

It is well-known that the complement of an $(n,k,\lambda)$ difference set in a group $G$ is an $(n,n-k,n-2k+\lambda)$ difference set, and it may be shown \cite{WenEtAl} that the complement in $G^*$ of an $(n,k;\lambda, \mu)$ partial difference set is an $(n,n-k-1,n-2k+\mu-2, n-2k+\lambda)$ partial difference set in $G$.

We can characterize the situation in which our $(n,2;k_1,k_2; \ell)$-RWEDF partitions $G$:

\begin{theorem}\label{thm:bigell}
Let $G$ be a group and let $\mathcal{A}=\{ A_1, A_2 \}$ partition $G$.\\   
Then $\mathcal{A}$ is an $(n,2; k, n-k ; \ell)$-RWEDF if and only if $A_1$ is an $(n,k, \lambda)$ difference set and $A_2$ is an $(n,n-k, n-2k+\lambda)$ difference set in $G$.\\
For such an RWEDF, $\ell=\frac{n}{n-1}$; in particular, $l \in \mathbb{Q} \setminus \mathbb{Z}$.
\end{theorem}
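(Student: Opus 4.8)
The plan is to reduce the RWEDF condition to a statement purely about the internal differences of $A_1$. Since $\{A_1,A_2\}$ partitions $G$ we have $T=k+(n-k)=n$, so Theorem~\ref{thm:params} immediately gives $\ell=\frac{(m-1)T}{n-1}=\frac{n}{n-1}$; as $n>2$ we have $\gcd(n,n-1)=1$ and $n-1\geq 2$, so $\ell\in\mathbb{Q}\setminus\mathbb{Z}$. This settles the final assertion, and it remains to prove the biconditional.

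First I would introduce, for each $i$ and each $\delta\in G^*$, the internal difference count $d_i(\delta)=\big|\{(a,a'):a,a'\in A_i,\ a-a'=\delta\}\big|$, so that $A_1$ being an $(n,k,\lambda)$ difference set means exactly $d_1(\delta)=\lambda$ for all $\delta$. The crux is a short list of counting identities valid in any group: fixing $a\in A_i$, the unique element $x$ with $a-x=\delta$ lies either in $A_i$ or in its complement, which gives $d_i(\delta)+N_i(\delta)=k_i$ for $i=1,2$. Combining this with $N_2(\delta)=N_1(-\delta)$ (established as in the proof of Theorem~\ref{m=2RWEDF}, and valid even for non-abelian $G$ since $a_1-a_2=\delta\iff a_2-a_1=-\delta$) and $d_1(-\delta)=d_1(\delta)$ (via the bijection $(a,a')\mapsto(a',a)$) yields $d_2(\delta)=n-2k+d_1(\delta)$. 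This is precisely the complement relationship for difference sets expressed at the level of difference functions, and it holds for every $k$-subset $A_1$, independently of whether $A_1$ is a difference set.

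With these in hand I would substitute $N_1(\delta)=k-d_1(\delta)$ and $N_2(\delta)=(n-k)-d_2(\delta)$ into the RWEDF condition, obtaining $\frac{1}{k}N_1(\delta)+\frac{1}{n-k}N_2(\delta)=2-\frac{d_1(\delta)}{k}-\frac{d_2(\delta)}{n-k}$, and then eliminate $d_2$ using $d_2(\delta)=n-2k+d_1(\delta)$. The resulting expression is an affine function of $d_1(\delta)$ whose leading coefficient is $-\left(\frac{1}{k}+\frac{1}{n-k}\right)=-\frac{n}{k(n-k)}\neq 0$ (since $0<k<n$). Hence the weighted sum is constant in $\delta$ if and only if $d_1(\delta)$ is constant, i.e.\ if and only if $A_1$ is a difference set; when this holds, $d_2(\delta)=n-2k+\lambda$ shows $A_2$ is the asserted $(n,n-k,n-2k+\lambda)$ difference set, and a short check using the difference-set identity $\lambda(n-1)=k(k-1)$ confirms the common value of the weighted sum is $\frac{n}{n-1}=\ell$, as it must be.

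The argument is essentially self-contained, so I do not anticipate a serious obstacle; the one point requiring care is verifying that the counting identities—especially $N_2(\delta)=N_1(-\delta)$ and $d_1(-\delta)=d_1(\delta)$—survive in the non-abelian setting, which is why I would phrase each of them through an explicit bijection on pairs rather than appealing to commutativity. The conceptual heart of the proof is simply the observation that, for a two-part partition, the reciprocally-weighted cross-difference sum is an affine, non-degenerate function of the single internal-difference count $d_1(\delta)$.
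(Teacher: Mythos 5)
Your proposal is correct, and it takes a genuinely different route from the paper. The paper's proof is structural: it invokes Theorem~\ref{m=2RWEDF} to conclude that the RWEDF must be an EDF or a GSEDF, rules out the EDF case ($k_1=k_2=n/2$) by a divisibility argument (the $n\cdot\frac{n}{2}$ ordered external pairs cannot be split evenly over the $n-1$ elements of $G^*$), then cites the known correspondence (Theorem~\ref{GSEDFpartition}) between GSEDFs partitioning $G$ and difference sets; the converse is dismissed as ``straightforward to check.'' You instead give a self-contained counting argument: because $\{A_1,A_2\}$ partitions $G$, you have $N_i(\delta)=k_i-d_i(\delta)$, and combining this with $N_2(\delta)=N_1(-\delta)$ and $d_1(-\delta)=d_1(\delta)$ yields $d_2(\delta)=n-2k+d_1(\delta)$, so the weighted sum $\frac{1}{k}N_1(\delta)+\frac{1}{n-k}N_2(\delta)$ becomes an affine function of $d_1(\delta)$ with nonzero slope $-\frac{n}{k(n-k)}$. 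This buys three things: both directions of the biconditional follow simultaneously from the non-degeneracy of a single affine relation; no case split on $k_1=k_2$ versus $k_1\neq k_2$ is needed (the impossible equal-size case is absorbed automatically, since no $(n,n/2,\lambda)$ difference set can exist); and, because every identity is established by an explicit bijection on pairs, the argument is valid verbatim for non-abelian $G$ --- a real advantage here, since the theorem is stated for arbitrary groups and the paper applies it to a non-abelian $(21,5,1)$ difference set, whereas the paper's cited Theorem~\ref{GSEDFpartition} is stated only for abelian groups. The trade-off is that the paper's route exposes the classification content (RWEDF $\Rightarrow$ GSEDF $\Rightarrow$ difference set), while yours is purely computational; both correctly arrive at $\ell=\frac{n(k-\lambda)}{k(n-k)}=\frac{n}{n-1}$.
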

\begin{proof}
From Theorem \ref{m=2RWEDF}, an $(n,2; k_1, k_2; \ell)$-RWEDF is either an EDF (when $k_1=k_2$) or a GSEDF (when $k_1 \neq k_2$).  If $k_1=k_2$, i.e. $n$ even and $k=\frac{n}{2}$, then since the number of ordered pairs which give external differences is $n. \frac{n}{2}$, and this is not divisible by $|G^*|=n-1$ since $n$ and $n-1$ are coprime, $\mathcal{A}$ cannot be an EDF.   So $k_1 \neq k_2$ and $\mathcal{A}$ is an GSEDF.  By Theorem \ref{GSEDFpartition} we see that $A_1$ and $A_2$ (which is the complement of $A_1$ in $G$) must be difference sets with the given parameters.  Conversely, it is straightforward to check that if $A_1$ is an $(n, k, \lambda)$ difference set (and so $A_2$ is an $(n, n-k, n-2k+\lambda)$ difference set) then $\mathcal{A}$ is an RWEDF with $\ell=\frac{n(k-\lambda)}{k(n-k)}=\frac{n}{n-1}$.
\end{proof}
We note that the value of $\ell$ attained by the construction of Theorem~\ref{thm:bigell} is the largest possible for any RWEDF in a group of order $n$ when $m=2$.
\begin{example}
Let $G=\mathbb{Z}_7$.  Let $A_1=\{0,1,3\}$ and $A_2=\{2,4,5,6\}$.  Then $\{A_1, A_2 \}$ is a $(7,2; 3,4; \frac{7}{6})$-RWEDF.
\end{example}

Difference sets have been widely studied, and many examples are known.  Since difference sets are defined and known for non-abelian groups, this gives a construction method for non-abelian RWEDFs.

\begin{example}
Let $G$ be the non-abelian group (written multiplicatively) given by $G=\{a,b: a^7=1, b^3=1, bab^{-1}=a^2\}$.  Then a $(21,5,1)$ difference set is given by $D=\{1,a,a^3,b,a^2b^2\}$.  Hence taking $A_1=D$ and $A_2=G \setminus D$ yields a $(21,2; 5, 16; \frac{21}{20})$-RWEDF.
\end{example}

As noted previously, the situation when an $(n,2; k_1, k_2; \ell)$-RWEDF partitions all-but-one of the elements of $G$ is the only case in which the parameter $\ell$ can be an integer; in this case, $\ell=1$.  When an external difference family partitions all-but-one of the elements of $G$ (usually the set of non-zero elements, $G^*$), it is called \emph{near-complete}.  Near-complete EDFs and GSEDFs have received attention in the literature, and some constructions for these offer infinite families of $(n,2; k_1, k_2; 1)$-RWEDFs.  We exhibit a classic example of a cyclotomic construction (see for example \cite{DavHucMul}); cyclotomy is a fruitful construction method in this area.

\begin{example}
For a prime power $q$ congruent to $1$ modulo $4$, let $G$ be the additive group of $GF(q)$.   Take $A_1$ to be the set of squares in $GF(q)^*$ and $A_2$ to be the set of non-squares in $GF(q)^*$; then $\mathcal{A}=\{ A_1, A_2 \}$ is a $(q,2,\frac{q-1}{2}, \frac{q-1}{2})$-EDF and hence a $(q,2;\frac{q-1}{2}, \frac{q-1}{2}; 1)$-RWEDF.
\end{example}


The following constructions (see \cite{PatSti} and \cite{LuNiuCao}) yield $(n,2; k_1, k_2;l)$-RWEDFs that do not partition the whole group.  For $k_1,k_2>2$, these give non-integer values of $l$.
\begin{construction}\label{con:m2optimal}
Consider the sets $\{0,1,2,\dotsc,k-1\}, \{k,2k,\dotsc,k^2\}$.
\begin{itemize}
\item Over $\mathbb{Z}_{k^2+1}$ this forms an SEDF with $\lambda_{\rm SEDF}=1$, and hence a $(k^2+1,2;k,k; \frac{2}{k})$-RWEDF.
\item Over $\mathbb{Z}_{2k^2+1}$ this forms an EDF with $\lambda_{\rm EDF}=1$, and hence a $(2k^2+1,2;k,k;\frac{1}{k})$-RWEDF.
\end{itemize}
\end{construction}

\begin{construction}
Consider the sets $\{0,1,2,\dotsc,k_1-1\},\{k_1,2k_1,\dotsc,k_1k_2\}\subset{\mathbb{Z}_{k_1k_2+1}}$.  This is a GSEDF, which forms a $(k_1 k_2+1, 2; k_1, k_2; \frac{1}{k_1}+\frac{1}{k_2})$-RWEDF.  Observe that 
we can take any values of $k_1$ and $k_2$.
\end{construction}

When using an RWEDF as a weak AMD code, the adversary's success probability is determined by the value of $\ell$.  Hence, in order to find codes where this probability is as small as possible, it is desirable to understand how small $\ell$ can be.  When $m=2$ we have $\ell=(m-1)T/(n-1)=(k_1+k_2)/(n-1)$.  The following theorem establishes the minimum possible value of $\ell$ for RWEDFs with $m=2$.
\begin{theorem}\label{thm:m2bestell}
If there exists an $(n,2;k_1,k_2;\ell)$-RWEDF then $\ell\geq \sqrt{2/(n-1)}$.
\end{theorem}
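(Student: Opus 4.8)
The plan is to reduce the problem to the dichotomy already established in Theorem~\ref{m=2RWEDF}, and then in each of the two cases to extract an integrality constraint that forces the sets to be sufficiently large. Recall from Theorem~\ref{thm:params} that with $m=2$ we have $\ell=\frac{T}{n-1}=\frac{k_1+k_2}{n-1}$, so minimising $\ell$ is equivalent to minimising $k_1+k_2$ over feasible parameter sets. Since $n>2$ and $m=2$, the family is automatically nontrivial, so each $k_i\geq 1$ and (as computed in Theorem~\ref{thm:params}) $\sum_{\delta\in G^*}N_i(\delta)=k_i(T-k_i)>0$ for each $i$. The crucial point throughout is that the relevant external-difference count is a \emph{positive integer}, hence at least $1$, and this lower bound of $1$ is exactly what pins down how small $k_1+k_2$ can be.

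By Theorem~\ref{m=2RWEDF}, the RWEDF is either an EDF (when $k_1=k_2=k$) or a GSEDF (when $k_1\neq k_2$). In the EDF case the parameter $\lambda=k\ell=\frac{2k^2}{n-1}$ must be a positive integer, so $\lambda\geq 1$, giving $2k^2\geq n-1$ and thus $k\geq\sqrt{(n-1)/2}$. Substituting into $\ell=\frac{2k}{n-1}$ yields
\[
\ell\;\geq\;\frac{2}{n-1}\sqrt{\tfrac{n-1}{2}}\;=\;\sqrt{\tfrac{2}{n-1}},
\]
which is precisely the claimed bound, so the EDF case is the tight one.

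In the GSEDF case, Theorem~\ref{m=2RWEDF} shows $N_1(\delta)=N_2(\delta)=\frac{k_1k_2}{n-1}$ is constant and (being a positive integer) at least $1$, whence $k_1k_2\geq n-1$. Applying the AM--GM inequality gives $k_1+k_2\geq 2\sqrt{k_1k_2}\geq 2\sqrt{n-1}$, so $\ell=\frac{k_1+k_2}{n-1}\geq\frac{2}{\sqrt{n-1}}=\sqrt{\tfrac{4}{n-1}}$, comfortably exceeding $\sqrt{2/(n-1)}$. Combining the two cases establishes the bound in general. The only subtlety—and the one step worth stating carefully rather than waving through—is the justification that the constant difference value is genuinely bounded below by $1$: this is not automatic from its definition as a rational, but follows from the strict positivity of $\sum_{\delta}N_i(\delta)=k_i(T-k_i)$ together with the constancy supplied by Theorem~\ref{m=2RWEDF}. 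Everything else is elementary substitution and AM--GM.
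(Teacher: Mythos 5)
Your proof is correct, but it takes a genuinely different route from the paper's. The paper argues directly and uniformly, with no case analysis: since $\ell=(k_1+k_2)/(n-1)>0$, every $\delta\in G^*$ must occur at least once as an external difference between $A_1$ and $A_2$, so $2k_1k_2\geq n-1$; substituting $k_2\geq (n-1)/(2k_1)$ into $\ell=(k_1+k_2)/(n-1)$ and minimising the one-variable function $k_1+\frac{n-1}{2k_1}$ (unique minimum $\sqrt{2(n-1)}$ at $k_1=k_2=\sqrt{(n-1)/2}$) yields the bound without any appeal to Theorem~\ref{m=2RWEDF}. You instead route through the structural dichotomy of Theorem~\ref{m=2RWEDF} and extract an integrality constraint in each branch: $\lambda=\frac{2k^2}{n-1}\geq 1$ in the EDF case, and $N_1(\delta)=N_2(\delta)=\frac{k_1k_2}{n-1}\geq 1$ in the GSEDF case. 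Both branches are sound, and you are right to flag the one delicate point: positivity of the constant count follows from constancy together with $\sum_{\delta\in G^*}N_i(\delta)=k_i(T-k_i)>0$, not from integrality alone. What your version buys: it localises where the bound can be tight (only the balanced case $k_1=k_2$), and it shows that for $k_1\neq k_2$ one gets the strictly stronger bound $\ell\geq 2/\sqrt{n-1}$, a factor $\sqrt{2}$ improvement; this refinement is invisible in the paper's argument. What the paper's version buys: it is self-contained (independent of the $m=2$ classification), shorter, and exhibits the extremal parameters explicitly; note that its covering inequality $2k_1k_2\geq n-1$ specialises, when $k_1=k_2=k$, to exactly the inequality $2k^2\geq n-1$ that drives your EDF case, so the two arguments coincide precisely where the bound is attained.
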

\begin{proof}
Suppose ${\cal A}=\{A_1,A_2\}$ is an $(n,2;k_1,k_2;\ell)$-RWEDF in a group $G$.  As each element of $G^*$ occurs at least once as a difference of the form $a_i-a_j$ with $a_i\in A_i$, $a_j\in A_j$ and $i\neq j$ we have $2k_1k_2\geq n-1$.  This implies that $k_2\geq (n-1)/(2k_1)$, so 
\begin{align*}
\ell&=\frac{k_1+k_2}{n-1}\\ &\geq \frac{k_1+\frac{n-1}{2k_1}}{n-1}.
\end{align*}
For a fixed value of $n-1$ we can thus minimise $\ell$ by minimising $k_1+\frac{n-1}{2k_1}$.  Treating this as a continuous function of $k_1$, we observe that it has a unique minimum of $\sqrt{2/(n-1)}$, which occurs when $k_1=k_2=\sqrt{(n-1)/2}$.
\end{proof}
The $(2k^2+1,2;k,k;\frac{1}{k})$-RWEDFs of Construction~\ref{con:m2optimal} achieve this minimum value of $\ell$, and hence the bound of Theorem~\ref{thm:m2bestell} is tight.  When used as weak AMD codes with two sources, these RWEDFS are weak $(2k^2+1,2,1/(2k))$-AMD codes in $\mathbb{Z}_{2k^2+1}$, and they exist for any positive integer $k$.  The adversary's success probability can thus be made arbitrarily low at the cost of a quadratic increase in the group size used, and this is best possible.

\section{RWEDF with integer $\ell$}

Although the parameter $\ell$ of an RWEDF may take any rational value, it is natural to begin by considering the case in which $\ell \in \mathbb{Z}$. 

We have seen that it is possible to obtain RWEDFs with $\ell=1$ when $m=2$.  We now give a result which shows that it is possible to obtain RWEDFs with integer $\ell \geq 1$.


\begin{proposition}
Let $G$ be a finite group.  For $1 \leq i \leq m$, let $A_i=\{a_i\}$ where $a_i \in G$.  Then
$A_1, \ldots A_m$ form an $(n,m;1,\ldots,1; \lambda)$-RWEDF if and only if $\{a_1, \ldots, a_m\}$ is an $(n,m,\lambda)$ difference set in $G$.   
\end{proposition}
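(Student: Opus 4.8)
The plan is to unwind both definitions and observe that they coincide, so that the equivalence is really a single equation read in two ways. First I would use the hypothesis that every block has size $1$: since $k_i = 1$ for all $i$, the reciprocal weights are $w_i = 1/k_i = 1$, and the defining condition of an $(n,m;1,\ldots,1;\lambda)$-RWEDF collapses to
\[
N_1(\delta) + N_2(\delta) + \cdots + N_m(\delta) = \lambda \quad \text{for all } \delta \in G^*.
\]
Because the $A_i$ are disjoint singletons, the elements $a_1, \ldots, a_m$ are pairwise distinct, so $D = \{a_1, \ldots, a_m\}$ is genuinely an $m$-subset of $G$.

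Next I would interpret the left-hand side combinatorially. Unpacking the definition of $N_i(\delta)$ with $A_i = \{a_i\}$, the quantity $N_i(\delta)$ counts the indices $j \neq i$ with $a_i - a_j = \delta$, so $\sum_{i=1}^m N_i(\delta)$ is precisely the number of ordered pairs $(a_i, a_j)$ with $i \neq j$ for which $a_i - a_j = \delta$. Equivalently, it is the number of ways of writing $\delta$ as a difference of two elements of $D$. Here the restriction $i \neq j$ is harmless: since $\delta \in G^*$ is non-zero, any pair contributing to $\delta$ automatically has $a_i \neq a_j$, hence $i \neq j$. Thus the RWEDF condition displayed above is word-for-word the statement that each non-identity element of $G$ has exactly $\lambda$ representations as a difference of elements of $D$, which is the definition of an $(n,m,\lambda)$ difference set.

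Having matched the two conditions, I would conclude both implications simultaneously: $\{A_1,\ldots,A_m\}$ is an RWEDF with the stated parameters if and only if the single equation holds for every $\delta \in G^*$, and that equation is exactly the difference-set condition on $D$. The only point requiring a line of care is the bookkeeping in the previous paragraph --- confirming via disjointness that $|D| = m$, and noting that pairs with $i = j$ contribute only to $\delta = 0$ and are therefore correctly excluded. I do not expect a genuine obstacle here; the content is a direct translation of definitions, and the argument is valid verbatim for non-abelian $G$ under the additive convention adopted in the earlier remark.
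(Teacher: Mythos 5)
Your proposal is correct: with all $k_i=1$ the reciprocal weights are all $1$, so the RWEDF condition $\sum_i N_i(\delta)=\lambda$ for every $\delta\in G^*$ is literally the statement that every non-identity element of $G$ arises exactly $\lambda$ times as a difference of two (necessarily distinct) elements of $\{a_1,\dotsc,a_m\}$, and your bookkeeping on disjointness and the $i\neq j$ restriction is exactly what is needed. The paper states this proposition without proof, treating it as immediate from the definitions, and your definitional unwinding is precisely that implicit argument.
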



As noted in the previous section, numerous examples of difference sets are known, in both abelian and non-abelian groups.

The difference set construction rather trivially achieves integer $\ell$ in the equation of Definition \ref{def:RWEDF}, since all the $k_i$'s equal $1$.  A more general condition that would give rise to integer $\ell$ would be the requirement that,  for each $1 \leq i \leq m$, $N_i$ is a multiple of $k_i$.  For a non-trivial RWEDF, we must have $N_i(\delta) \leq k_i$ for all $\delta \in G^*$ by Lemma \ref{k_i}; our requirement would therefore mean that $N_i(\delta) \in \{0,k_i\}$ for all $\delta \in G^*$.  

This motivates the following definition.

\begin{definition}\label{bimodal}
Let $G$ be a finite group and let $\cal A$ be a collection $A_1,A_2,\dotsc,A_m$ of disjoint subsets of $G$ with sizes $k_1,k_2,\dotsc,k_m$ respectively. We shall say that $\cal A$ has the {\em bimodal property} if for all $\delta\in G^*$ we have $N_j(\delta)\in\{0,k_j\}$ for $j=1,2,\dotsc,m$. 
\end{definition}

\begin{remark}
The discussion preceding Definition \ref{bimodal} shows that any bimodal $(n,m; k_1, \ldots, k_m; \ell)$-RWEDF has $\ell \in \mathbb{Z}$.  We note that the converse does not hold;  Example \ref{ex:z10} illustrates an RWEDF with integer $\ell$ that is not bimodal.
\end{remark}

There are some potential parameter choices for an RWEDF that naturally give rise to this bimodal property:

\begin{theorem}\label{thm:allornothing}
An $(n,m;k_1,\dotsc,k_m;\ell)$-RWEDF with $\ell \in \mathbb{Z}$ and $\{k_1, \ldots, k_m\}$ pairwise coprime is bimodal.
\end{theorem}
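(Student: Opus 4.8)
The plan is to prove the two facts that together force bimodality: that $k_i$ divides $N_i(\delta)$ for every $i$ and every $\delta\in G^*$, and that $N_i(\delta)$ never exceeds $k_i$. The first is where the pairwise-coprimality and integrality hypotheses do all the work; the second is essentially immediate. Together they leave no option but $N_i(\delta)\in\{0,k_i\}$.

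For the divisibility, I would fix $\delta\in G^*$ and start from the defining identity
$$\frac{N_1(\delta)}{k_1}+\frac{N_2(\delta)}{k_2}+\dotsb+\frac{N_m(\delta)}{k_m}=\ell,$$
then clear denominators by multiplying through by $K=\prod_{j=1}^m k_j$ to obtain the integer equation
$$\sum_{j=1}^m N_j(\delta)\,\frac{K}{k_j}=\ell K.$$
Fixing an index $i$ and reducing this equation modulo $k_i$ is the decisive step: every summand with $j\neq i$ has $k_i$ as a factor of $K/k_j$ and so vanishes, and $\ell K$ vanishes as well since $k_i\mid K$. What survives is $N_i(\delta)\,(K/k_i)\equiv 0\pmod{k_i}$. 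Because the $k_j$ are pairwise coprime, $k_i$ shares no factor with $K/k_i=\prod_{j\neq i}k_j$, so we may cancel it to conclude $k_i\mid N_i(\delta)$.

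To finish, I would recall that for each $a\in A_i$ there is at most one element $a-\delta$ lying in $\bigcup_{j\neq i}A_j$, whence $0\le N_i(\delta)\le k_i$ (this is the bound of Lemma~\ref{k_i}(i), and it holds trivially in the excluded trivial cases as well). The only multiples of $k_i$ in the range $[0,k_i]$ are $0$ and $k_i$, so $N_i(\delta)\in\{0,k_i\}$; as $i$ and $\delta$ were arbitrary, $\mathcal{A}$ is bimodal. I expect the modular reduction to be the one genuinely load-bearing step, since it is precisely there that pairwise coprimality is converted into the divisibility $k_i\mid N_i(\delta)$; everything else is routine bookkeeping.
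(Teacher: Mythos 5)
Your proof is correct and follows essentially the same route as the paper's: clear denominators by multiplying by $\prod_j k_j$, use pairwise coprimality (with $\ell\in\mathbb{Z}$) to deduce $k_i \mid N_i(\delta)$, and combine with the bound $N_i(\delta)\leq k_i$ from Lemma~\ref{k_i} to conclude $N_i(\delta)\in\{0,k_i\}$. The only difference is cosmetic: you phrase the divisibility step as reduction modulo $k_i$, whereas the paper isolates the term $k_2\cdots k_m N_1(\delta)$ and factors $k_1$ out of the remaining side.
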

\begin{proof}
Let $\delta \in G^*$.  By definition, 
\[ \frac{1}{k_1}N_1(\delta)+\frac{1}{k_2}N_2(\delta)+\dotsb+\frac{1}{k_m}N_m(\delta)= \ell. \]
Multiply through by the product $k_1 \cdots k_m$ to get
\[ k_2 \cdots k_m N_1(\delta)+ \cdots + k_1 \cdots k_{m-1} N_m(\delta)=\ell k_1 \cdots k_m, \]
whence
\[ k_2 \cdots k_m N_1(\delta)= k_1 (\ell k_2 \cdots k_m - \cdots - k_2 \cdots k_{m-1} N_m(\delta)). \]
Since $k_1$ divides the right-hand side of this equation, it must divide the left-hand side.  Since $k_1$ is coprime to $k_2, \ldots, k_m$, we must have $k_1 \mid N_1(\delta)$.  If $N_1(\delta)=0$, we are done.  Otherwise, $N_1(\delta)$ is a positive multiple of $k_1$.  But by Lemma \ref{k_i}, $N_1(\delta) \leq k_1$, so in fact $N_1(\delta)=k_1$.

The same argument holds for the other values of $i$.
\end{proof}

Taking the elements of a difference set as singleton sets provides one example of a bimodal RWEDF.  We now exhibit a bimodal RWEDF that satisfies the conditions of Theorem \ref{thm:allornothing}.

\begin{example}\label{ExZ12}
Take $G=\mathbb{Z}_{12}$, $A_1=\{3,6,9\}$, $A_2=\{4,8\}$, $A_3=\{1\}$, $A_4=\{2\}$, $A_5=\{5\}$, $A_6=\{7\}$, $A_7=\{10\}$ and $A_8=\{11\}$.  This is a $(12,8;3,2,1,1,1,1,1,1;7)$-RWEDF that is bimodal. 
\end{example}

We shall investigate how the bimodality property leads to infinite families of new RWEDFs.  We will frequently consider the set-up where we have a collection $\mathcal{A}$ of disjoint subsets $A_1, \ldots, A_m$ of $G$; for each $1 \leq i \leq m$, we will denote by $B_i$ the union $\cup_{j \neq i} A_j$.

Let $I(A_i)$ be the set of internal differences of $A_i$, namely those elements of the form $g_1-g_2$ with $g_1,g_2\in A_i$ and $g_1\neq g_2$.  We will be interested in studying the group that these elements generate.

\begin{definition}
Let $A_i$ be a subset of an abelian group $G$.  We define the {\em internal difference group} of $A_i$ to be the subgroup $H_i\leq G$ that is generated by the elements of $I(A_i)$, namely $H_i=\langle I(A_i)\rangle$.
\end{definition}

\begin{remark}
The group $H_i$ has the property that $A_i$ is contained in a single coset of $H_i$.  Furthermore, $H_i$ is the smallest subgroup $H$ of $G$ with the property that $A_i$ is contained in a single coset of $H$.  To see this, note that by definition, every element of $I(A_i)$ is an element of the group $H_i$.  This implies that for any $u,v\in A_i$ then $u-v\in H_i$ and hence $u$ and $v$ belong to the same coset of $H_i$.  If $H$ is any subgroup of $G$ with $A_i \subseteq x+H$ for some $x\in G$ then every element of $I(A_i)$ lies in $H$, and hence $H_i\leq H$.
\end{remark}

The following theorem characterises the relationship between cosets and bimodality.

\begin{theorem}\label{union_of_cosets}
Let $G$ be a finite abelian group and let $\cal A$ be a collection $A_1,A_2,\dotsc,A_m$ of disjoint subsets of $G$ with sizes $k_1,k_2,\dotsc,k_m$ respectively. Then $\cal A$ is bimodal if and only if for each $j$ with $k_j>1$ the set $B_j$ is a union of cosets of the subgroup $H_j$.
\end{theorem}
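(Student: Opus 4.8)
The plan is to translate both conditions into statements about translates of $A_j$ relative to $B_j$. Since the sets $A_i$ are pairwise disjoint, for each fixed $a_j \in A_j$ the element $a_j - \delta$ either lies in $B_j = \bigcup_{i \neq j} A_i$ or it does not, so directly from the definition $N_j(\delta) = |\{a_j \in A_j : a_j - \delta \in B_j\}| = |(A_j - \delta) \cap B_j|$. The bimodal condition at index $j$ thus says exactly that, for every $\delta \in G^*$, the translate $A_j - \delta$ is either disjoint from $B_j$ or entirely contained in it. On the other side, ``$B_j$ is a union of cosets of $H_j$'' is equivalent to $B_j + H_j = B_j$, i.e.\ to $B_j$ being invariant under translation by every element of $H_j$. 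When $k_j = 1$ we have $H_j = \{0\}$ and $N_j(\delta) \leq k_j = 1$, so both conditions hold automatically; hence only the indices $j$ with $k_j > 1$ require argument.

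For the forward direction, I would assume $\mathcal{A}$ is bimodal, fix $j$ with $k_j > 1$, and show that $B_j$ is stabilised by each generator $a - a'$ of $H_j$ (with $a, a' \in A_j$). Since the stabiliser $\{g \in G : B_j + g = B_j\}$ is a subgroup of $G$, this will give $B_j + H_j = B_j$. Given any $x \in B_j$, set $\delta = a' - x$; this lies in $G^*$ because $A_j$ and $B_j$ are disjoint, so $a' \neq x$. As $a' - \delta = x \in B_j$ we have $N_j(\delta) \geq 1$, so bimodality forces $N_j(\delta) = k_j$ and in particular $a - \delta \in B_j$. But $a - \delta = x + (a - a')$, so $x + (a-a') \in B_j$; since $x \in B_j$ was arbitrary and $G$ is finite, $B_j + (a - a') = B_j$, as required.

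For the converse, I would assume $B_j + H_j = B_j$ for every $j$ with $k_j > 1$ and verify bimodality. Fix such a $j$ and a $\delta \in G^*$ with $N_j(\delta) > 0$; it suffices to deduce $N_j(\delta) = k_j$. Choose $a^* \in A_j$ with $a^* - \delta \in B_j$. For an arbitrary $a \in A_j$, the Remark preceding the theorem gives $a - a^* \in H_j$ (as $A_j$ lies in a single coset of $H_j$), so $a - \delta = (a^* - \delta) + (a - a^*) \in (a^* - \delta) + H_j \subseteq B_j$. Thus every $a \in A_j$ satisfies $a - \delta \in B_j$, giving $N_j(\delta) = k_j$ and hence bimodality.

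The argument is essentially a dictionary between the counting language of the $N_j$ and the coset language of $H_j$, and I do not expect a serious obstacle in either direction. The one point that needs care is the reduction, in the forward direction, from invariance under the individual generating differences $a - a'$ to invariance under the whole group $H_j$: this rests on the stabiliser of $B_j$ being a subgroup, equivalently on the finiteness of $G$ (so that $B_j + g \subseteq B_j$ already forces $B_j + g = B_j$). I would also keep checking that each $\delta$ introduced lies in $G^*$, which is precisely what the disjointness of $A_j$ and $B_j$ guarantees.
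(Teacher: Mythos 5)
Your proposal is correct and takes essentially the same route as the paper: the forward direction shows each internal difference $a-a'$ of $A_j$ translates $B_j$ into itself by applying bimodality to $\delta=a'-x$ (your stabiliser-subgroup remark just makes explicit the paper's step of extending from $I(A_j)$ to all of $H_j$), and the converse uses the fact that $A_j$ lies in a single coset of $H_j$ to propagate one witness $a^*-\delta\in B_j$ to all of $A_j-\delta$. No gaps; the handling of $k_j=1$ and of $\delta\in G^*$ via disjointness matches what the paper leaves implicit.
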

\begin{proof}
Let $G$ be a finite additive abelian group and let $\cal A$ be a collection $A_1,A_2,\dotsc,A_m$ of disjoint subsets of $G$ with sizes $k_1,k_2,\dotsc,k_m$ respectively.
Suppose $k_j>1$ and consider $a_j\in A_j$. The differences $a_j-b_j$ with $b_j\in B_j$ are all distinct, which implies that if $N_j(\delta)=k_j$ then for each of the $k_j$ elements $a\in A_j$ there exists $b\in B_j$ with $a-b=\delta$. 

Suppose $\cal A$ is bimodal.  Suppose $k_j>1$ and let $\theta\in I(A_j)$.  Then $\theta=a^\prime-a$ for some $a,a^\prime\in A_j$.  Let $b\in B_j$.  If $a-b=\delta$, then from above there must exist $b^\prime \in B_j$ with $a^\prime-b^\prime=\delta$.  From this, we may deduce that $b+\theta=b+a^\prime-a=-\delta+a+a^\prime-a=b^\prime \in B_j$, and hence we deduce that $B_j+ \theta \subseteq B_j$. Furthermore, for any $\theta \in H_j$ we have $B_j + \theta \subseteq B_j$.  This implies that for any $b\in B_j$, the coset $b+H_j \subseteq B_j$, hence $B_j$ is a union of cosets of $H_j$.

Conversely, suppose that for each $j$ with $k_j>1$ we have that $B_j$ is a union of $r$ cosets of $H_j$, so that $B_j=\cup_{i=1}^r b_i+H_j$ for some distinct $b_i \in B_j$. Then for $a \in A_j$ the differences $a-b$ for $b\in B_j$ are precisely the elements of $\cup_{i=1}^r (a-b_i)+H_j$. As this is the case for any $a \in A_j$, we deduce that $N_j(\delta)=k_j$ if $\delta\in\cup_{i=1}^r (a-b_i)+H_j$ and $0$ otherwise, and hence $\cal A$ is bimodal.
\end{proof}

\begin{corollary}\label{N_i=0}
Suppose $\mathcal{A}$ is bimodal.  Then for $x \in H_i$, $N_i(x)=0$.
\end{corollary}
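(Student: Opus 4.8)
The plan is to read the result off directly from Theorem~\ref{union_of_cosets}, which already encodes the coset structure forced by bimodality. First I would dispose of the case $k_i=1$: here $A_i$ is a singleton, so $I(A_i)=\emptyset$ and $H_i=\{0\}$; the only element of $H_i$ is $0$, and $N_i(0)=0$ holds trivially, since no pair $(a,b)$ with $a\in A_i$ and $b\in B_i$ can satisfy $a-b=0$ when $A_i$ and $B_i$ are disjoint. So the substance of the corollary lies entirely in the case $k_i>1$.

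For $k_i>1$, Theorem~\ref{union_of_cosets} tells us that bimodality of $\mathcal{A}$ forces $B_i$ to be a union of cosets of $H_i$, and I would then argue by contradiction. Suppose $N_i(x)\neq 0$ for some $x\in H_i$; then there exist $a\in A_i$ and $b\in B_i$ with $a-b=x$, i.e.\ $a=b+x$. Since $x\in H_i$, the element $b+x$ lies in the coset $b+H_i$, and because $B_i$ is a union of $H_i$-cosets we have $b+H_i\subseteq B_i$, whence $a=b+x\in B_i$. But the sets $A_1,\dots,A_m$ are pairwise disjoint, so $A_i\cap B_i=\emptyset$, contradicting $a\in A_i\cap B_i$. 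Hence $N_i(x)=0$ for every $x\in H_i$.

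There is really no hard step here: the corollary is a short consequence of Theorem~\ref{union_of_cosets}, and the whole argument turns on the single observation that $x\in H_i$ keeps $b+x$ inside the coset $b+H_i$. The only points requiring care are the bookkeeping of the trivial size-one case and the explicit use of the disjointness $A_i\cap B_i=\emptyset$, which is precisely what makes the deduced membership $a\in B_i$ contradictory.
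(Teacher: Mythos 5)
Your proof is correct and takes essentially the same route as the paper's: both read the corollary off from Theorem~\ref{union_of_cosets}'s guarantee that $B_i$ is a union of cosets of $H_i$, you by contradiction (a difference $x \in H_i$ would force $a = b + x \in b + H_i \subseteq B_i$, violating $A_i \cap B_i = \emptyset$), the paper directly (all differences out of $A_i$ lie in cosets of $H_i$ other than $H_i$ itself). Your explicit handling of the $k_i = 1$ case, where Theorem~\ref{union_of_cosets} says nothing and $H_i = \{0\}$, is a small point of extra care that the paper's proof leaves implicit.
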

\begin{proof}
All differences out of $A_i$ have the form $a-b$ where $a \in A_i$ and $b \in B_i$;  by Theorem \ref{union_of_cosets}, $B_i$ is a union of cosets of $H_i$ and is disjoint from the coset of $H_i$ containing $A_i$.  The elements arising as differences therefore  lie within a union of cosets of $H_i$ which does not include $H_i$ itself. 
\end{proof}

We are now able to show that, in certain circumstances, the difference set construction is the only bimodal construction possible - for example, when $\ell=1$:
\begin{theorem}
Let $m\geq 3$. Let $G$ be a finite abelian group of order $n$ and let $\cal A$ be a collection $A_1,A_2,\dotsc,A_m$ of disjoint subsets of $G$ with sizes $k_1,k_2,\dotsc,k_m$ respectively.  Suppose $\cal A$ has the bimodal property.  Then $\mathcal{A}$ is an $(n,m;k_1,\dotsc,k_m;1)$-RWEDF if and only if $\mathcal{A}$ comprises singleton sets whose elements form an $(n,m,1)$ difference set. 
\end{theorem}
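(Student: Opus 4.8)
The plan is to prove both implications, with the reverse (constructing the RWEDF) routine and the forward direction carrying the real content. For the reverse implication, suppose $\mathcal{A}$ consists of singletons $A_i=\{a_i\}$ whose elements form an $(n,m,1)$ difference set. Singleton families are automatically bimodal, since $N_i(\delta)\le k_i=1$ forces $N_i(\delta)\in\{0,1\}=\{0,k_i\}$. The earlier Proposition identifying singleton RWEDFs with difference sets then shows immediately that $\mathcal{A}$ is an $(n,m;1,\ldots,1;1)$-RWEDF, so this direction needs no further work.

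For the forward implication I would first exploit bimodality together with $\ell=1$. Since each term $N_i(\delta)/k_i$ lies in $\{0,1\}$ and the terms sum to $1$, for every $\delta\in G^*$ exactly one index is ``active'': there is a unique $i=i(\delta)$ with $N_{i}(\delta)=k_{i}$ and $N_{i'}(\delta)=0$ for $i'\ne i$. Writing $D_i=\{\delta\in G^*:N_i(\delta)=k_i\}$, the sets $D_1,\ldots,D_m$ partition $G^*$, and the count $\sum_\delta N_i(\delta)=k_i(T-k_i)$ from the proof of Theorem~\ref{thm:params} gives $|D_i|=T-k_i$; that theorem also supplies $n-1=(m-1)T$. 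The goal is to show every $k_i=1$, after which the Proposition finishes the job, so I would argue by contradiction, assuming some $k_j\ge 2$.

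Next I would extract the structural constraints on such a $j$. The internal difference group $H_j$ is nontrivial, and by Theorem~\ref{union_of_cosets} the set $B_j$ is a union of cosets of $H_j$, while Corollary~\ref{N_i=0} gives $D_j\cap H_j=\emptyset$. Since $|B_j|=T-k_j$, we get $|H_j|\mid(T-k_j)$; combining this with $|H_j|\mid n=(m-1)T+1$ yields $|H_j|\mid(m-1)k_j+1$. Because $A_j$ lies in one coset of $H_j$ we have $|H_j|\ge k_j$, and $|H_j|=k_j$ is impossible (it would force $k_j\mid 1$), so $|H_j|\ge k_j+1$. The crucial qualitative picture, obtained from Theorem~\ref{union_of_cosets}, is that the cosets of $H_j$ meeting $B_j$ are each entirely filled by the remaining sets, whereas the coset containing $A_j$ carries $|H_j|-k_j\ge 1$ uncovered elements.

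The main obstacle is turning these local facts into a global contradiction, and I expect this to be where the work lies. When $m$ is small it is immediate: for $m=3$, taking $j$ to be a largest part makes $|H_j|=2k_j+1$ the only admissible divisor, yet $|H_j|\le T-k_j\le 2k_j$, a contradiction. For general $m$ the divisibility alone is too weak, since many divisors of $(m-1)k_j+1$ survive, so I would run an extremal argument, choosing $j$ with $|H_j|$ minimal among indices with $k_j\ge 2$. The uncovered coset of $A_j$ then rules out $H_j\subseteq H_i$ for any other large set $A_i$: otherwise the entire coset of $A_j$ would sit inside a single $H_i$-coset of $B_i$ and hence be fully covered, contradicting the uncovered elements. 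If $A_j$ is the \emph{only} large set, all other parts are singletons, so $T-k_j=m-1$ and $|H_j|\mid\gcd(m-1,(m-1)k_j+1)=1$, contradicting $|H_j|\ge k_j+1$. The genuinely hard case, which I regard as the crux, is when several large sets coexist: here each other large set must spread across at least two cosets of $H_j$, and one must show that these mutually incomparable coset structures cannot simultaneously meet the union-of-cosets requirement of Theorem~\ref{union_of_cosets}. I would attack this by passing to the quotient modulo $H_j$ and tracking how the fully covered cosets of $B_j$ are partitioned among the large sets, aiming to reproduce the coset-misalignment contradiction visible in the explicit small examples.
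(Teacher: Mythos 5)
Your reverse direction is fine, and your set-up for the forward direction (exactly one ``active'' index per $\delta$, the counts $|D_i|=T-k_i$, the identity $n-1=(m-1)T$, the divisibility $|H_j|\mid (T-k_j)$ and $|H_j|\mid (m-1)k_j+1$, and the bound $|H_j|\geq k_j+1$) is all correct, as are your dispositions of the $m=3$ case and the case of a single non-singleton set. But the proposal has a genuine gap: the case you yourself identify as the crux --- several non-singleton sets coexisting --- is not proved. What you offer there is a plan (``pass to the quotient modulo $H_j$ and track how the fully covered cosets are partitioned, aiming to reproduce the coset-misalignment contradiction''), not an argument. Nothing in the proposal shows that the incomparability of the subgroups $H_i$, $H_j$ actually conflicts with the union-of-cosets condition; incomparable subgroups can perfectly well have each other's unions-of-cosets structures coexist in a group (e.g.\ two distinct subgroups of $\mathbb{Z}_3\times\mathbb{Z}_3$), so the contradiction must come from somewhere more specific, and you have not located it. As written, the proof does not go through for general $m$.

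For comparison, the paper's proof avoids all global counting and case analysis with a short local argument you may want to internalize: assume some $k_1\geq 2$ and take an internal difference $\varepsilon=u-v$ with $u,v\in A_1$. Bimodality plus $\ell=1$ gives a \emph{unique} $j$ with $N_j(\varepsilon)\neq 0$, and $j\neq 1$ by Corollary~\ref{N_i=0}. Pick $u'\in A_j$ and set $\gamma=v-u'$; since $\gamma$ is a difference out of $A_1$, bimodality gives $N_1(\gamma)=k_1$, so there is $w\in B_1$ with $u-w=\gamma$, i.e.\ $w=u-\gamma$. Then $w-u'=(u-\gamma)-(v-\gamma)=u-v=\varepsilon$, and uniqueness of $j$ forces $w\in A_j$; since $w\neq u'$ this puts $\varepsilon\in I(A_j)\subseteq H_j$, whence $N_j(\varepsilon)=0$ by Corollary~\ref{N_i=0} --- contradicting $N_j(\varepsilon)\neq 0$. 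This uses only the two corollaries you already invoke, needs no extremal choice of $j$, and handles all $m\geq 3$ uniformly; your counting framework, while correct, is solving a harder problem than necessary.
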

\begin{proof}
The reverse direction is immediate.  For the forward direction, suppose it is not the case that $k_1=k_2=\dotsb=k_m=1$.  Then without loss of generality we can suppose that $k_1\geq 2$.  Let $u,v\in A_1$ with $u\neq v$, and denote $u-v$ by $\varepsilon\in I(A_1)$.  The condition $\ell=1$ implies that there is a unique $j$ with $N_j(\varepsilon)=1$.  By Corollary~\ref{N_i=0} we know that $j\neq 1$.  Let $u^\prime\in A_j$.  Then there exists $v^\prime \in B_j$ with $u^\prime-v^\prime=\varepsilon$.  Furthermore, as $\varepsilon\in H_1$ we know that $-\varepsilon\in H_1$, which implies $N_1(-\varepsilon)=0$.  Hence $v^\prime \in A_k$ for some $k\neq 1,j$.

Let $v-u^\prime=\gamma$.  Then there exists $w\in B_1$ with $u-w=\gamma$.  Observe that $w-u^\prime=(u-\gamma)-(v-\gamma)=u-v=\varepsilon$. Since $j$ is the unique value for which $N_j(\varepsilon)\neq 0$, it must be the case that $w\in A_j$.  Note that as $\varepsilon\neq 0$ we have $w\neq u^\prime$.  But this implies $\varepsilon \in I(A_j)$, which contradicts the fact that $N_j(\varepsilon)\neq 0$, by Corollary~\ref{N_i=0}.
\end{proof}

The next result will prove a useful tool in using bimodality to construct new families of RWEDFs.   

\begin{proposition}\label{key_prop}
Let $G$ be a finite abelian group and let $\cal A$ be a collection $A_1,A_2,\dotsc,A_m$ of disjoint subsets of $G$ satisfying the bimodal property.  Then the following conditions are equivalent:
\begin{itemize}
\item $\mathcal{A}$ is an RWEDF;
\item there exists a constant $\lambda$ such that, for all $\delta \in G^*$, $|\{i: N_i(\delta) \neq 0\}|=\lambda$;
\item there exists a constant $\mu$ such that, for all $\delta \in G^*$, $|\{i: N_i(\delta)=0\}|=\mu$.
\end{itemize}
\end{proposition}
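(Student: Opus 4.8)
The plan is to observe that the bimodal hypothesis collapses the weighted sum defining an RWEDF into a plain counting function, after which the three conditions become manifestly interchangeable. The only substantive step is this reduction; everything else is bookkeeping.

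First I would record the key identity. Since $\mathcal{A}$ is bimodal, for every $\delta \in G^*$ and every index $i$ we have $N_i(\delta) \in \{0, k_i\}$, so the summand $\frac{1}{k_i}N_i(\delta)$ equals $1$ when $N_i(\delta) \neq 0$ and equals $0$ otherwise. Thus $\frac{1}{k_i}N_i(\delta)$ is exactly the indicator of the event $N_i(\delta) \neq 0$, and summing over $i$ gives
\[
\sum_{i=1}^m \frac{1}{k_i}N_i(\delta) = |\{i : N_i(\delta) \neq 0\}|
\]
for every $\delta \in G^*$. (Note $0 < 1/k_i \le 1$, so the weights are admissible, and disjointness with $|A_i|=k_i$ is already assumed.)

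Next I would deduce the equivalence of the first two conditions directly from this identity. By Definition~\ref{def:RWEDF}, $\mathcal{A}$ is an RWEDF precisely when the left-hand side above is independent of $\delta$ (with common value $\ell$); by the displayed identity this is exactly the assertion that $|\{i : N_i(\delta) \neq 0\}|$ is constant, i.e. the second condition holds with $\lambda = \ell$. Conversely, if the second condition holds with constant $\lambda$, the same identity shows the weighted sum equals $\lambda$ for all $\delta$, so $\mathcal{A}$ is an RWEDF with $\ell = \lambda$.

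Finally, the equivalence of the last two conditions is a counting complement that needs no bimodality at all: for each fixed $\delta$ the $m$ indices partition into those with $N_i(\delta) \neq 0$ and those with $N_i(\delta) = 0$, so
\[
|\{i : N_i(\delta) \neq 0\}| + |\{i : N_i(\delta) = 0\}| = m .
\]
Hence one count is constant in $\delta$ exactly when the other is, with $\mu = m - \lambda$. I expect no genuine obstacle here: the whole content lives in the bimodal reduction of the first step, and once that indicator identity is in place the remaining implications follow immediately.
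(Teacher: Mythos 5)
Your proposal is correct and follows essentially the same route as the paper: both proofs observe that under bimodality each term $\frac{1}{k_i}N_i(\delta)$ is the indicator of $N_i(\delta)\neq 0$, so the weighted sum literally counts $|\{i : N_i(\delta)\neq 0\}|$, and then pass to the third condition by the complement identity with $m$. No gaps; your writeup is just a slightly more explicit version of the paper's argument.
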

\begin{proof}
For $\delta \in G^{*}$, the term $\frac{1}{|A_i|}N_i(\delta)$ equates to $1$ if $N_i(\delta)=k_i$ and $0$ if $N_i(\delta)=0$.  So 
$$ \frac{1}{|A_1|}N_1(\delta)+\frac{1}{|A_2|}N_2(\delta)+\dotsb+\frac{1}{|A_m|}N_m(\delta)$$
counts the number $\mu_{\delta}$ of $i \in \{1, \ldots, m\}$ such that $N_i(\delta) \neq 0$. By definition, $\mathcal{A}$ is an RWEDF if and only if $\mu_{\delta}$ is constant for all $\delta \in G^*$.  Equivalently, since the number of $i \in \{1, \ldots, m\}$ such that $N_i(\delta) = 0$ is given by $m-d_{\delta}$, we see that $\mathcal{A}$ is an RWEDF if and only if this quantity is constant for all $\delta \in G^*$.
\end{proof}
This means that, given a collection of sets known to be bimodal, checking whether it is an RWEDF is equivalent to checking that every non-zero group element arises as a difference (equivalently, does not arise as a difference) out of the same number of $A_i$'s .

\begin{remark}
Observe that, as a consequence of Corollary \ref{l<m}, $\lambda \leq m-1$ and $\mu \geq 1$ in Proposition \ref{key_prop}.
 \end{remark}

In the bimodal RWEDF of Example \ref{ExZ12}, the sets $A_1, \ldots, A_m$ partition $G^*$.  Motivated by this, we consider the general situation in which the sets $A_1, \ldots, A_m$ partition $G^*$.

\begin{proposition}\label{G*:bimodal}
Let $G$ be a finite abelian group and let $\cal A$ be a collection $A_1,A_2,\dotsc,A_m$ of disjoint subsets of $G$ with sizes $k_1,k_2,\dotsc,k_m$ respectively, which partition $G^*$.  Then $\mathcal{A}$ is bimodal if and only if each $A_i$ with $k_i>1$ satisfies $A_i=H_i ^{*}$.
\end{proposition}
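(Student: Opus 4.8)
The plan is to derive both directions from Theorem~\ref{union_of_cosets}, exploiting the defining feature of this setting: since $\mathcal{A}$ partitions $G^*$, we have $B_j = \bigcup_{i \neq j} A_i = G^* \setminus A_j$ for every $j$, and so $G$ is the disjoint union $\{0\} \sqcup A_j \sqcup B_j$. The other ingredient I would use is the basic fact (recorded in the remark following the definition of $H_i$) that $A_i$ lies inside a single coset of $H_i$, together with the observation that $H_i$ is nontrivial whenever $k_i > 1$ (since then $I(A_i)$ contains a nonzero element).

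For the reverse direction, suppose $A_i = H_i^*$ for each $i$ with $k_i > 1$. Then $B_i = G^* \setminus A_i = G \setminus H_i$, which is manifestly a union of cosets of $H_i$, namely all the cosets other than $H_i$ itself. Theorem~\ref{union_of_cosets} then immediately gives that $\mathcal{A}$ is bimodal.

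For the forward direction, suppose $\mathcal{A}$ is bimodal and fix $j$ with $k_j > 1$. Theorem~\ref{union_of_cosets} tells us that $B_j$ is a union of cosets of $H_j$. The key observation I would make is that $0 \in H_j$ while $0 \notin B_j$ (as $B_j \subseteq G^*$); since $B_j$ is a union of whole cosets, the coset $H_j$ cannot be among them, so $H_j \cap B_j = \emptyset$. Combined with $G = \{0\} \sqcup A_j \sqcup B_j$, this forces $H_j \subseteq \{0\} \cup A_j$, and hence $H_j^* \subseteq A_j$ (as $0 \notin A_j$). For the reverse inclusion I would use that $A_j$ is contained in a single coset $x + H_j$: since $k_j > 1$ makes $H_j$ nontrivial, $H_j^*$ is nonempty, so choosing any $h \in H_j^* \subseteq A_j \subseteq x + H_j$ yields $x = h - h'$ with $h, h' \in H_j$, whence $x \in H_j$ and $x + H_j = H_j$. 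Therefore $A_j \subseteq H_j$, and excluding $0$ gives $A_j \subseteq H_j^*$. Together with the first inclusion this yields $A_j = H_j^*$, as required.

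The argument is short once Theorem~\ref{union_of_cosets} is in hand, so I do not anticipate a genuine obstacle. The one point requiring care is the forward direction's identification of \emph{which} coset of $H_j$ contains $A_j$: it is tempting to take this for granted as $H_j$ itself, but it must be justified, and the clean way to do so is precisely the observation that $0 \notin B_j$ pins the identity coset inside $\{0\} \cup A_j$.
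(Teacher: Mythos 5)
Your proposal is correct, and while both proofs rest on Theorem~\ref{union_of_cosets} (with identical reverse directions), your forward direction takes a genuinely different route at the crucial step. Both arguments must show that the single coset of $H_i$ containing $A_i$ is $H_i$ itself. The paper does this in two stages: it first shows $A_i$ equals the set of nonzero elements of its containing coset $x+H_i$ (since $B_i$, a union of cosets disjoint from $A_i$, must avoid $x+H_i$ entirely, and the partition of $G^*$ then forces the remaining nonzero elements of that coset into $A_i$), and then rules out the case $x\not\in H_i$ by a counting argument: there $|A_i|=|H_i|=h\geq 2$, $n=hb$, and $|B_i|=(n-1)-h=h(b-1)-1$ would have to be divisible by $h$ because $B_i$ is a union of cosets of size $h$, forcing $h=1$. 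Your argument bypasses both the case split and the divisibility computation: since $0\notin B_j$ and $B_j$ is a union of whole cosets of $H_j$, the identity coset cannot occur in $B_j$, so $H_j\cap B_j=\emptyset$; the decomposition $G=\{0\}\sqcup A_j\sqcup B_j$ then gives $H_j^*\subseteq A_j$, and nonemptiness of $H_j^*$ (from $k_j>1$) pins the coset containing $A_j$ to be $H_j$ itself, yielding $A_j\subseteq H_j^*$. Your version is shorter and purely structural, and you correctly flag the one delicate point (identifying \emph{which} coset contains $A_j$); what the paper's longer argument buys is an explicit arithmetic explanation of why a full non-identity coset is impossible when partitioning $G^*$, which illuminates the contrast with Proposition~\ref{G:bimodal}, where partitioning all of $G$ does make the non-singleton sets full cosets.
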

\begin{proof}
$(\Rightarrow)$: First, suppose $\mathcal{A}$ is bimodal.  We first show that, for each $i$ with $k_i>1$, one of the following holds: either $A_i=x+ H_i$ for some $x \not\in H_i$ or $A_i=H_i ^{*}$.  We then rule out the former case.

Let $A_i$ ($k_i>1$) be contained in the coset $x+H_i$ of $H_i$.  Since $B_i$ is disjoint from $A_i$ by definition and is a union of cosets of $H_i$ by Theorem \ref{union_of_cosets}, $B_i$ cannot include any of  the coset $x+H_i$.  Since the elements of $\mathcal{A}$ partition $G^*$, the non-zero elements $N$ of $x+H_i$ must be included in the union of all the $A_i$, i.e. must lie in $A_i$. Since by definition $A_i \subseteq N$, we have $N=A_i$.  If $x+ H_i \neq H_i$, the set $N$ is the whole of  $x+H_i$, while if $x+H_i=H_i$ then $N$ is  $H_i ^*$.  

Now, suppose $A_i=x+H_i$, for some $x \not\in H_i$; so $k_i=|H_i|=h$ (say) where $h \geq 2$.  Then $n=hb$ for some positive integer $b$.  Since $\mathcal{A}$ partitions $G^*$, $B_i = G^* \setminus A_i$, and so $|B_i|=(n-1)-h=h(b-1)-1$.  Since $B_i$ is a union of cosets of $H_i$, $h$ divides $h(b-1)-1$.  However, this is possible only if $h=1$.

$(\Leftarrow)$ Suppose that for $A_i$  ($k_i>1$), we have $A_i=H_i^*$.  Then for such an $A_i$, since $\mathcal{A}$ partitions $G^*$, we must have $B_i=G^* \setminus H_i^*$, and so $B_i$ is a union of cosets of $H_i$.  Theorem \ref{union_of_cosets} now guarantees that $\mathcal{A}$ is bimodal.
\end{proof}

 In fact, we can prove that any collection of sets which partition $G^*$, with the property that all non-singleton sets are subgroups with the zero element removed, will yield an RWEDF; here $G$ may be any finite group, abelian or otherwise.  

\begin{theorem}\label{suff_cond_bimodal}
Let $G$ be a finite group of order $n$ and let $\cal A$ be a collection $A_1,A_2,\dotsc,A_m$ of disjoint subsets of $G$ with sizes $k_1,k_2,\dotsc,k_m$ respectively.  Suppose the sets of $\cal A$ satisfy the following:
\begin{itemize}
\item the $A_i$ partition $G^*$
\item every non-singleton $A_i$ has the form $A_i=S_i^*$ for some subgroup $S_i$ of $G$.
\end{itemize}
Then $\mathcal{A}$ is a bimodal $(n,m; k_1,k_2 \ldots, k_m; m-1)$-RWEDF.
\end{theorem}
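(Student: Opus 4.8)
The plan is to verify the two asserted properties directly: first that $\mathcal{A}$ is bimodal, and then that it is an RWEDF with $\ell=m-1$. The one structural feature I must keep in mind is that $G$ is allowed to be non-abelian, so I cannot invoke the coset description of Theorem~\ref{union_of_cosets} or the characterisation of Proposition~\ref{G*:bimodal}, both of which assume $G$ abelian. Instead I will compute $N_j(\delta)$ from scratch for each $j$ and each $\delta\in G^{*}$. Throughout, $a-b=\delta$ is read via the fixed group operation; the side on which the inverse is applied turns out to be immaterial, because every membership test below only uses that $S_j$ is closed under products and inverses.

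For bimodality, the singleton sets need no work: $k_j=1$ forces $N_j(\delta)\in\{0,1\}=\{0,k_j\}$. So I fix a non-singleton $A_j=S_j^{*}$. Since the $A_i$ partition $G^{*}$, the complementary set is $B_j=G^{*}\setminus S_j^{*}=G\setminus S_j$, i.e.\ the union of all the nontrivial cosets of $S_j$. For $a\in S_j^{*}$ the partner $b$ with $a-b=\delta$ is uniquely determined by $a$, and because $a\in S_j$ and $S_j$ is a subgroup, $b$ lies in $S_j$ exactly when $\delta$ does. Hence if $\delta\in S_j$ then every partner falls in $S_j$ and so outside $B_j$, giving $N_j(\delta)=0$; while if $\delta\notin S_j$ then every partner falls in $B_j$, and as $a\mapsto b$ is injective we get $N_j(\delta)=|S_j^{*}|=k_j$. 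Thus $N_j(\delta)\in\{0,k_j\}$ and $\mathcal{A}$ is bimodal.

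The observation that drives the rest is that, in both the singleton and non-singleton cases, $N_j(\delta)=0$ if and only if $\delta\in A_j$. For a non-singleton this is precisely the dichotomy above, since $\delta\neq 0$ makes $\delta\in S_j$ equivalent to $\delta\in S_j^{*}=A_j$; for a singleton $A_j=\{a_j\}$ the unique partner of $a_j$ leaves $B_j$ exactly when $\delta=a_j$. Since the $A_i$ partition $G^{*}$, each $\delta\in G^{*}$ lies in exactly one $A_j$, so for every $\delta$ there is exactly one index with $N_j(\delta)=0$ and $m-1$ indices with $N_j(\delta)=k_j$. Each of the latter contributes $\frac{1}{k_j}N_j(\delta)=1$, so
\[
\frac{1}{k_1}N_1(\delta)+\dotsb+\frac{1}{k_m}N_m(\delta)=m-1
\]
for all $\delta\in G^{*}$, which is exactly the RWEDF condition of Definition~\ref{def:RWEDF} with $\ell=m-1$. (As a consistency check, $T=\sum k_i=|G^{*}|=n-1$, and $\ell=m-1$ indeed satisfies $(n-1)\ell=(m-1)T$.)

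The only genuinely delicate point is the non-abelian count in the second paragraph: lacking the abelian coset machinery, I must confirm by hand both that $a\mapsto b$ is a bijection from $S_j^{*}$ onto its image and that this image lies entirely inside or entirely outside $B_j$ according to whether $\delta\in S_j$. I expect this subgroup-membership bookkeeping to be the main (and essentially only) obstacle; once the clean equivalence $N_j(\delta)=0\iff\delta\in A_j$ is secured, the RWEDF conclusion with $\ell=m-1$ is an immediate consequence of the partition hypothesis.
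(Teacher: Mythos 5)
Your proof is correct and follows essentially the same route as the paper's: both establish the dichotomy $N_j(\delta)=k_j$ for $\delta\notin S_j$ and $N_j(\delta)=0$ for $\delta\in S_j$ (the paper via a multiset count of $H^{*}-(G\setminus H)$, you via tracking each element's unique difference partner and subgroup closure, which is the same computation in different bookkeeping), and then both conclude from the partition of $G^{*}$ that exactly one index vanishes for each $\delta$, giving $\ell=m-1$. Your explicit care about non-abelian $G$ matches the paper's intent, since its argument is likewise stated for arbitrary finite groups.
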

\begin{proof}
We first prove that $\mathcal{A}$ is bimodal.  For any subgroup $H$ of a finite additive group $G$, the multiset of differences $H-G=\{h-g: h \in H, g \in G\}$ yields each element of $G$ a total of $|H|$ times.  The multiset of differences $H-(G \setminus H)$ yields each element of $G \setminus H$ a total of $|H|$ times (and each element of $H$ zero times), and so the multiset of differences $H^*-(G \setminus H)$  yields each element of $G \setminus H$ a total of $|H|-1$ times (and each element of $H$ zero times).  

Hence in our setting, for each non-singleton $A_i$, the set of differences out of $A_i(=S_i^*)$ comprises each element of $G \setminus S_i$ a total of $|S_i|-1=k_i$ times and each element of $S_i$ zero times: for $\delta \in G^*$, $N_i(\delta)=k_i$ for $\delta \not\in A_i$ and $N_i(\delta)=0$ for $\delta \in A_i$. 

For any singleton $\{g\}$ with $g \neq 0$, $g-(G \setminus \{g\})$ comprises each element of $G\setminus \{0,g\}$ once each (and $0$ and $g$ not at all). So again for $\delta \in G^*$ we have $N_i(\delta)=k_i$ for $\delta \not\in A_i$ and $N_i(\delta)=0$ for $\delta \in A_i$. 

We now show that $\mathcal{A}$ is an RWEDF.  Since the $A_i$ partition $G^*$, each $\delta \in G^*$ is in a unique $A_j$: then for $1 \leq i \leq n$,  $N_i(\delta)=|A_i|$ for $i \neq j$ and $0$ for $i=j$.  So for each $\delta \in G^*$, its weighted sum receives a contribution of $1$ ($= \frac{1}{|A_i|} |A_i|$) when $i \neq j$, and $0$ when $i=j$, i.e. a total of $m-1$.
\end{proof}

\begin{remark}
For an abelian group $G$, the construction of Theorem \ref{suff_cond_bimodal} gives precisely the situation described in Proposition \ref{G*:bimodal}, since if $A_i$ is a subset of an abelian group $G$, such that $|A_i| \geq 2$, and 
$A_i=H^*$ where $H$ is a subgroup of $G$, then $H_i=H$.  The cardinality requirement is important: if $A_i=H^*$ with $|H|=2$, say $H=\{0,h\}$, then $A_i=\{h\}$  and $H_i=\{0\}$.  But if $|H|$ has size $3$, say $\{0,g,h\}$, then the claim holds, as $A_i=\{g,h\}$ and $H_i$ must contain each of $\{0,g,h\}$ by group properties; a similar argument holds when $|H| \geq 3$.
\end{remark}

\begin{example}\label{ExRWEDF_Z_3}
Let $G=\mathbb{Z}_3 \times \mathbb{Z}_3$.  Let $A_1=\{(1,1), (2,2)\}$, $A_2=\{(0,1), (0,2)\}$, $A_3=\{(1,2), (2,1)\}$ and $A_4=\{(1,0), (2,0)\}$.  Observe that for each $A_i$, the subgroup $H_i$ is precisely $A_i \cup \{0\}$.  The union of the two non-trivial cosets of $H_i$ equals the union of the other $3$ sets $A_j$  with $j \neq i$, where each $A_j$ contains precisely one element of each coset.  Then $\cal A$ $=\{ A_1, A_2, A_3, A_4 \}$ is bimodal.  For $\delta \in G^*$, $N_i(\delta)=2$ for $\delta \not\in A_i$ and $N_i(\delta)=0$ for $\delta \in A_i$ (for each $1 \leq i \leq 4$).  This implies that the collection satisfies the conditions of Proposition \ref{key_prop} with $\lambda=3$ and $\mu=1$ and so $\cal A$ forms a $(9,4;2,2,2,2;3)$-RWEDF (indeed, a $(9,4,2,6)$-EDF).  

$$
\begin{array}{c||cc|cc|cc|cc}
-&11&22&01&02 & 12 & 21 & 10 &20  \\ \hline\hline
11&\emph{00} & \emph{22} & 10 & 12 & 02 & 20 & 01 & 21 \\
22&\emph{11} & \emph{00} & 21 & 20 & 10 & 01 & 12 & 02  \\ \hline
01& 20 & 12 & \emph{00} & \emph{02} & 22 &10 & 21 & 11  \\
02& 21 & 10 & \emph{01} & \emph{00} & 20 & 11 & 22 & 12 \\ \hline
12& 01 & 20 & 11 & 10 & \emph{00} &  \emph{21} & 02 & 22 \\
21& 10 & 02 & 20 & 22 & \emph{12} & \emph{00} & 11 & 01  \\ \hline
10& 02& 21 & 12 & 11 & 01 & 22 & \emph{00} & \emph{20} \\
20& 12 & 01 & 22 & 21 & 11 & 02 & \emph{10} & \emph{00}
\end{array}
$$
\end{example}

Various general constructions may be obtained using different groups and subgroups.  The RWEDF given in Example \ref{ExZ12} is a special case of the following construction:

\begin{construction}
Let $G=(\mathbb{Z}_n,+)$ where $n=p^ {\alpha} q^{\beta}$ for distinct primes $p,q$.  The subgroups isomorphic to $(\mathbb{Z}_{p^{\alpha}},+)$ and $(\mathbb{Z}_{q^{\beta}},+)$, each with the zero element removed, can be taken as $A_1$ and $A_2$, while the remaining non-zero elements may be taken as singleton sets.
\end{construction}

The main challenge in constructing such RWEDFs with interesting parameters is to identify groups with sizeable collections of subgroups that are almost-disjoint in the necessary way.  We introduce a group-theoretic concept that will help us in this.

\begin{definition}
If a finite group $G$ has subgroups $S_1,S_2,\dotsc, S_m$ with the property that $S_1^*,S_2^*,\dotsc , S_m^*$ partition $G^*$, then we will call the collection of subgroups $S_1,S_2,\dotsc, S_m$ a \emph{$*$-partition} of $G$.  A $*$-partition is called \emph{trivial} if $m=1$.
\end{definition}

The topic of $*$-partitions of groups has been studied extensively; see \cite{Zap} for a comprehensive survey.  In the literature, $*$-partitions of groups are referred to simply as \emph{partitions of groups}, but in this paper we will use the name $*$-partition to avoid confusion with partitions of the whole group $G$ by subsets of $G$. 

Any $*$-partition of a group forms a bimodal RWEDF.
This is a special (stronger) case of Theorem \ref{suff_cond_bimodal}; here any singleton elements also satisfy the property that $A_i=S_i^*$ for some subgroup $S_i$.
The question of which finite abelian groups possess a non-trivial $*$-partition was established by Miller in \cite{Mil}.

 \begin{theorem}
The only finite abelian groups $G$ admitting a nontrivial $*$-partition are elementary abelian $p$-groups of order $p^e$, for $p$ prime and $e\geq 2$.  
\end{theorem}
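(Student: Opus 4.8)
The plan is to prove both implications, with essentially all the work in the forward (``only if'') direction. Throughout I write a nontrivial $*$-partition as $\{S_1,\dots,S_m\}$ with $m\ge 2$, so that the $S_i$ are subgroups with $S_i\cap S_j=\{0\}$ for $i\neq j$ and $\bigcup_i S_i=G$. The converse is the easy part: given $G\cong(\mathbb{Z}_p)^e$ with $e\ge 2$, I would take the $S_i$ to be the one-dimensional subspaces (lines through $0$); each nonzero vector spans a unique line, so the $S_i^*$ partition $G^*$, and since $e\ge 2$ there are $\frac{p^e-1}{p-1}\ge p+1>1$ lines, giving a nontrivial $*$-partition. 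For the forward direction the goal is to show $G$ is an elementary abelian $p$-group: a finite abelian group in which every nonzero element has the same prime order $p$ is exactly $(\mathbb{Z}_p)^e$, and $m\ge 2$ forces $|G|>p$ (since $\mathbb{Z}_p$ has only the trivial $*$-partition), hence $e\ge 2$.

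The one structural observation I would record first is a \emph{third-part lemma}: if $g\in S_i^*$ and $h\in S_j^*$ with $i\neq j$, then $g+h\neq 0$ and $g+h$ lies in some $S_k$ with $k\notin\{i,j\}$ --- for otherwise $h=(g+h)-g$ or $g=(g+h)-h$ would place a nonzero element in $S_i\cap S_j$. With this in hand the problem reduces to two claims: (a) only one prime divides $|G|$, and (b) the exponent is $p$. Claim (b) is the cleaner one. Assuming $G$ is already a $p$-group, if the exponent exceeded $p$ I would pick $g$ of order $p^2$ (a suitable power of a maximal-order element) in a part $S_i$, and a nonzero $h$ in a different part $S_j$ reduced to have order $p$. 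Then $p(g+h)=pg\neq 0$ and $p^2(g+h)=0$, so $g+h$ has order $p^2$; by the third-part lemma $g+h\in S_k$ with $k\neq i$, yet $p(g+h)=pg$ is a nonzero element of $S_i\cap S_k$, a contradiction.

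I expect Claim (a) to be the main obstacle, precisely because a $*$-partition only constrains interactions \emph{between} distinct parts, so local conclusions must be propagated carefully to reach a global contradiction. The core subargument is: if $a$ has prime order $p$ and $b$ has prime order $q\neq p$ and they lie in different parts, then $a+b$ has order $pq$, say $a+b\in S_k$; computing $q(a+b)=qa$ and $p(a+b)=pb$ shows that the nonzero elements $qa$ and $pb$ both lie in $S_k$, forcing the part of $a$ and the part of $b$ each to equal $S_k$ and hence to coincide --- contradicting that they were different. Thus any element of order $p$ and any of order $q$ share a part. Now suppose two distinct primes divided $|G|$; fixing such a common part $S_0$, every element of prime order is forced into $S_0$ (directly when its order is $p$ or $q$, and via the subargument applied to that prime and $p$ otherwise). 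But any other part $S_l\neq S_0$ contains a nonzero element, hence a suitable power of prime order, which is then forced into $S_0$, contradicting $S_l\neq S_0$. So only one prime divides $|G|$, completing the reduction and, together with Claim (b), the proof.
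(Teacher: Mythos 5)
Your proof is correct, but there is nothing in the paper to compare it against: the paper states this result as Miller's theorem and simply cites \cite{Mil}, giving no proof of its own. So your argument stands as a complete, self-contained, elementary proof of a result the paper only quotes. Checking the details: the converse via the $1$-dimensional subspaces of $(\mathbb{Z}_p)^e$ is exactly the $a=1$ case of the paper's Construction 4.20; the third-part lemma is sound (if $g+h$ lay in $S_i$, then $h=(g+h)-g$ would be a nonzero element of $S_i\cap S_j$); the mixed-prime subargument correctly forces elements of distinct prime orders into a common part, since $qa$ and $pb$ are nonzero elements of $S_i\cap S_k$ and $S_j\cap S_k$ respectively; the propagation to all prime-order elements and then to a second part $S_l\neq S_0$ gives the contradiction showing $G$ is a $p$-group; and the exponent-$p$ step works because $pg=p(g+h)$ is a nonzero element of $S_i\cap S_k$ with $k\neq i$. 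You use two standard facts without proof (Cauchy's theorem, and that commuting elements of coprime orders sum to an element of order equal to the product), which is fine. One point worth making explicit in a final write-up: distinct indices give distinct subgroups (their starred parts are nonempty and disjoint), which is what guarantees both that every nonzero element lies in a \emph{unique} part and that a part $S_l\neq S_0$ (respectively $S_j\neq S_i$) actually exists when $m\geq 2$.
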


The elementary abelian $p$-groups can be viewed as the additive groups of vector spaces over finite fields, and a $*$-partition of such a group can be viewed as a partition of the vectors into subspaces that intersect only in $\mathbf{0}$.  These are known as {\em vector space partitions}, and have been extensively studied.  (See \cite{heden} for a survey on vector space partitions.)  Every elementary abelian $p$-group of order at least $p^2$, for $p$ prime has at least one non-trivial $*$-partition, as the following well-known construction demonstrates:
\begin{construction}\label{con:desarguesian}
Let $p$ be a prime, let $e\geq 2$ and let $e=ab$ for positive integers $a$ and $b$.  The group $\mathbb{Z}_p^e$ can be viewed as the additive group of the $b$-dimensional vector space over $\gf{p^a}$.  The set of all $1$-dimensional subspaces of this $b$-dimensional vector space forms a vector space partition, which corresponds to a $*$-partition of $\mathbb{Z}_p^e$ into subspaces of order $p^a$.

This construction partitions the $p^n-1$ elements of $(\mathbb{Z}_p^e)^\ast$ into $p^{a(b-1)}+p^{a(b-2)}+\dotsb+p^a+1$ sets of size $p^a-1$.  Explicitly, these are precisely the sets of the form 
\begin{align*}
\{\lambda(x_1,x_2,\dotsc,x_{j},1,0,\dotsc,0)|\lambda \in \gf{p^a}^*\}\subset\gf{p^a}^b
\end{align*}
 for some $j=0,1,2,\dotsc,b-1$ and some $x_1,x_2,\dotsc,x_{j-1}\in \gf{p^a}$.
\end{construction}
The $*$-partitions arising from Construction~\ref{con:desarguesian} have the property that all sets in the partition have the same size; the group is then said to be {\em equally partitioned} \cite{isaacs1973}.  For some choices of $a$, $e$ and $p$ there exist $*$-partitions of $(\mathbb{Z}_p^e)^*$ into sets of size $a-1$ that are not isomorphic to those arising from Construction~\ref{con:desarguesian}; in particular, the case where $e=2a$ has been widely studied due to a connection with the construction of translation planes \cite{andre}.    The bimodal RWEDFs arising from equally partitioned groups are in fact EDFs.  As their sets partition the elements of $G^*$ they are examples of near-complete EDFs.  We note that most of the explicit constructions of near-complete EDFs in the literature have used multiplicative cosets in finite fields and are not bimodal.   It is known, however, that a near-complete EDF is equivalent to a disjoint $(v,k,k-1)$-difference family.  Buratti has shown that many known examples of these, including those of Construction~\ref{con:desarguesian}, can be viewed as special cases of a construction arising from an automorphism group acting semiregularly on the kernel of a Frobenius group \cite{buratti}.

Having seen that partitioning $G^*$ with bimodal collections of sets yields new RWEDFs, we may ask whether the same is true when we partition $G$ in a similar way.  

\begin{proposition}\label{G:bimodal}
Let $G$ be a finite abelian group.  Let  $\mathcal{A}=\{A_1, \ldots, A_m \}$ be a set of disjoint subsets that partition $G$.  Then $\mathcal{A}$ is bimodal if and only if each non-singleton $A_i$ is a coset of $H_i$.
\end{proposition}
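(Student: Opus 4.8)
The plan is to deduce this directly from Theorem~\ref{union_of_cosets}, exploiting the fact that when $\mathcal{A}$ partitions $G$ we have the clean identity $B_i = G \setminus A_i$ for every $i$. Since $G$ is itself the disjoint union of all cosets of any subgroup $H_i$, the condition ``$B_i$ is a union of cosets of $H_i$'' appearing in Theorem~\ref{union_of_cosets} translates neatly into a condition on $A_i$ alone, and the equivalence should fall out with little extra work.

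For the forward direction, I would assume $\mathcal{A}$ is bimodal and fix $j$ with $k_j > 1$. By Theorem~\ref{union_of_cosets}, $B_j$ is a union of cosets of $H_j$; since $B_j = G \setminus A_j$, the set $A_j$ is the complement in $G$ of a union of $H_j$-cosets. Recall that $A_j$ lies inside a single coset $x + H_j$. The key step is to argue that $A_j$ cannot be a proper subset of $x + H_j$: if some $y \in (x + H_j) \setminus A_j$ existed, then $y \in B_j$, and because $B_j$ consists of \emph{complete} cosets this would force $x + H_j \subseteq B_j$, contradicting $\emptyset \neq A_j \subseteq x + H_j$. Hence $A_j = x + H_j$ is a full coset of $H_j$.

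For the converse, I would suppose each non-singleton $A_i$ is a coset of $H_i$. Then for such an $i$ the complement $B_i = G \setminus A_i$ is obtained by deleting one full coset of $H_i$ from $G$, leaving a union of the remaining cosets; thus $B_i$ is a union of cosets of $H_i$. Theorem~\ref{union_of_cosets} then immediately yields that $\mathcal{A}$ is bimodal, completing the equivalence.

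I do not anticipate a serious obstacle here; the only point requiring care is the complementation argument in the forward direction, where one must use that $B_j$ is built from \emph{complete} cosets in order to rule out $A_j$ sitting as a proper subset of its own coset. This mirrors the structure of the proof of Proposition~\ref{G*:bimodal}, but the $G$-partition case is in fact cleaner: there is no stray zero element to track, and hence none of the divisibility obstruction that forced $h=1$ in the $G^*$ setting, so here $A_j$ is permitted to be any coset of $H_j$.
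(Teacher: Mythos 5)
Your proposal is correct and follows essentially the same route as the paper's proof: both directions reduce to Theorem~\ref{union_of_cosets} via the identity $B_i = G \setminus A_i$, with the forward direction using that a union of complete $H_j$-cosets disjoint from the nonempty set $A_j \subseteq x+H_j$ cannot meet $x+H_j$, forcing $A_j = x+H_j$. Your closing observation that the $G$-partition case avoids the divisibility obstruction of Proposition~\ref{G*:bimodal} is also accurate.
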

\begin{proof}
$(\Rightarrow)$:  Suppose $\mathcal{A}$ is bimodal.
Let $A_i$ ($k_i>1$) be contained in the coset $x+H_i$ of $H_i$.  By Theorem \ref{union_of_cosets}, $B_i$ is a union of cosets of $H_i$, disjoint from $A_i$ by definition.  Since $A_i \cup B_i=G$, $A_i$ must contain the coset $x+H_i$.  But this coset contains $A_i$, so $A_i=x+H_i$.\\
$(\Leftarrow)$: Suppose that, for $A_i$ with $k_i>1$, $A_i$ is a coset of $H_i$.  Since $\mathcal{A}$ partitions $G$, $B_i=G \setminus A_i$ is a union of cosets of $H_i$, and so $\mathcal{A}$ is bimodal by Theorem \ref{union_of_cosets}.
\end{proof}

\begin{theorem}\label{no_cosets}
Let $G$ be a finite abelian group and let $\cal A$ be a bimodal collection $A_1,A_2,\dotsc,A_m$ of disjoint subsets of $G$ that partition $G$, with sizes $k_1,k_2,\dotsc,k_m$ respectively.  If $m>1$ and $k_i>1$ for some $1 \leq i \leq m$, then $\mathcal{A}$ is not an RWEDF.
\end{theorem}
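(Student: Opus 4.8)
The plan is to argue by contradiction. Suppose, for a bimodal $\mathcal{A}$ partitioning $G$ with $m>1$ and some $k_i>1$, that $\mathcal{A}$ were an $(n,m;k_1,\dotsc,k_m;\ell)$-RWEDF. Two facts about $\ell$ then pull in opposite directions. On the one hand, since $\mathcal{A}$ partitions $G$ we have $T=\sum_i k_i=n$, so Theorem~\ref{thm:params} pins the weight parameter down as $\ell=(m-1)n/(n-1)$. On the other hand, bimodality forces $\ell$ to be an integer: for every $\delta\in G^*$ each term $\tfrac{1}{k_i}N_i(\delta)$ equals $0$ or $1$, so the defining sum $\ell$ is a non-negative integer (this is exactly the remark following Definition~\ref{bimodal}). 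The whole proof reduces to showing these two facts are incompatible.

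Next I would extract the arithmetic consequence. Since $\gcd(n,n-1)=1$, the integrality of $(m-1)n/(n-1)$ forces $(n-1)\mid(m-1)$. Because $\mathcal{A}$ partitions the $n$-element group $G$ into $m$ non-empty parts we have $m\le n$, and the hypothesis $m>1$ gives $1\le m-1\le n-1$. The only multiple of $n-1$ lying in this range is $n-1$ itself, so $m-1=n-1$, i.e. $m=n$.

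Finally I would derive the contradiction: if $m=n$, then $n$ non-empty disjoint parts whose union is the $n$-element set $G$ must all be singletons, so $k_i=1$ for every $i$. This contradicts the assumption that $k_i>1$ for some $i$. (Equivalently, $m=n$ with all parts singletons is precisely the trivial $(n,n;1,\dotsc,1;n)$-RWEDF of Example~\ref{RWEDFex}, for which $\ell=n=m$, the degenerate case our hypotheses are designed to exclude.) Hence no such RWEDF exists.

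The argument is short and I do not anticipate a serious obstacle. The only points that must be handled with care are the justification that bimodality genuinely forces $\ell\in\mathbb{Z}$ — this is what allows the coprimality of $n$ and $n-1$ to do the work — together with the bound $m\le n$ coming from the partition, which is what collapses the divisibility relation to the single excluded all-singletons case. As an alternative to the divisibility step, one could instead invoke Corollary~\ref{l<m}: the RWEDF is nontrivial (it has $m>1$ and is not all singletons), so an integer $\ell$ would satisfy $\ell\le m-1$, yet $\ell=(m-1)n/(n-1)>m-1$, a contradiction.
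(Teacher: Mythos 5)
Your proof is correct, but it takes a genuinely different route from the paper's. The paper argues structurally: by Proposition~\ref{G:bimodal}, every non-singleton $A_i$ is a coset of its internal difference group $H_i$; writing $U$ for the union of the distinct subgroups involved, one checks that $N_i(\delta)=0$ exactly when $\delta\in H_i$, so the nonzero $\delta$ inside $U$ have $|\{i: N_i(\delta)\neq 0\}|<m$ while those outside $U$ have this count equal to $m$, and both classes are shown to be non-empty; Proposition~\ref{key_prop} then rules out the RWEDF property. Your argument bypasses all of this coset machinery: you only use the parameter identity of Theorem~\ref{thm:params} (with $T=n$ forced by the partition), the fact that bimodality makes each term $\tfrac{1}{k_i}N_i(\delta)$ equal to $0$ or $1$ and hence $\ell\in\mathbb{Z}$, and the coprimality of $n$ and $n-1$ together with $1\le m-1\le n-1$ to force $m=n$, i.e.\ the excluded all-singletons case. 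Each approach buys something: yours is shorter, purely arithmetic, and --- notably --- never uses commutativity (Theorem~\ref{thm:params} and the integrality remark hold in any finite group), so it proves the statement for non-abelian $G$ as well, whereas the paper's route leans on Theorem~\ref{union_of_cosets} and Proposition~\ref{G:bimodal}, which are stated for abelian groups. The paper's proof, in exchange, is more informative: it exhibits exactly where the RWEDF condition fails (differences lying in $U$ versus outside $U$), rather than showing only that the parameters are incompatible. One small caution on your alternative ending: Corollary~\ref{l<m} applies to nontrivial RWEDFs, and its conclusion $\ell<m$ implicitly needs $m<n$; in your setting this does hold (since $n=\sum k_i\ge m+1$), but your main divisibility argument is preferable precisely because it does not lean on that corollary at all.
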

\begin{proof}
By Proposition \ref{G:bimodal}, each non-singleton $A_i$ is a coset of $H_i$.  We can consider each singleton as a coset of $\{0\}$.  Suppose sets $A_1, \ldots, A_m$ are cosets of \emph{distinct} subgroups $S_1, \ldots, S_c$ where $1 \leq c \leq m$.  

The trivial RWEDFs correspond to $m=1$ (when $A_1=G$) and $k_1= \cdots = k_m=1$;  to avoid triviality, we may assume $m>1$, and $k_i>1$ for at least one $i \in \{1, \ldots, m\}$.  Write $A_i=x_i+H_i$ where $H_i \in \{S_1, \ldots S_c\}$.  So $|A_i|=|H_i|$.  Note that several $H_i$ may equal the same $S_j$.  We claim that, for $\delta \in G^*$, $N_i(\delta)=0$ if and only if $\delta \in H_i$. Corollary \ref{N_i=0} guarantees the reverse direction.  The forward direction follows from the fact that $B_i$ is the union of all cosets of $H_i$ except for $H_i$ itself.

Let $U=\cup_{i=1}^c S_i$.  The number of non-zero elements in $U$ is at least $1$ and at most $\sum_{i=1}^m (k_i-1)=n-m$.  Since $m \geq 2$, $1 \leq |U \setminus \{0\}|  \leq n-2$. For the (non-zero) elements $\delta \in U$, there is at least one value of $i \in \{1, \ldots, m\}$ such that $N_i(\delta)=0$.   Correspondingly, the number of elements of $G^*$ which do not lie in $U$ satisfies $1 \leq |G \setminus U| \leq n-2$.  For the elements $\delta \in G \setminus U$,  $N_i(\delta)=k_i>0$ for all $i \in \{1, \ldots, m\}$.   

So, overall, the $n-1$ elements of $G^*$ form two disjoint sets, neither of which is empty: namely those $\delta \in G^*$ for which $|\{i: N_i(\delta) \neq 0\}|<m$, and those $\delta \in G^*$ for which $|\{i: N_i(\delta) \neq 0\}|=m$.  By Proposition \ref{key_prop}, this is not an RWEDF.


\end{proof}

We observe that, although motivated by a necessary condition for abelian groups, the construction of Theorem \ref{suff_cond_bimodal} holds for any finite group $G$.  Hence any collection of subgroups in a non-abelian $G$ which intersect only in the identity, may be used to construct one of these more generally-defined RWEDFs, by taking the subgroups with identity removed, then taking all remaining non-identity elements as singleton sets.

Furthermore, the notion of $*$-partition is defined for any finite group, and an RWEDF can be constructed from a $*$-partition of any such group.  We may ask which finite groups $G$ admit a non-trivial $*$-partition; a characterization is given in \cite{Zap}.

\begin{theorem}[\cite{Zap}]\label{ZapThm}
A finite group $G$ has a non-trivial $*$-partition if and only if it satisfies one of the following conditions:
\begin{itemize}
\item $G$ is a $p$-group with Hughes subgroup $H_p(G) \neq G$ and $|G|>p$;
\item $G$ is a Frobenius group;
\item $G$ is a group of Hughes-Thompson type;
\item $G$ is isomorphic to $PGL(2,p^h)$ with $p$ an odd prime;
\item $G$ is isomorphic to $PSL(2,p^h)$ with $p$ prime;
\item $G$ is isomorphic to a Suzuki group $G(q)$, $q=2^h$, $h>1$.
\end{itemize}
\end{theorem}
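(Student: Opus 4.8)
The statement is a biconditional classification, so my plan is to treat the two directions separately: first the (comparatively routine) verification that each listed family admits a non-trivial $*$-partition, and then the (substantial) converse that the list is exhaustive. For the forward direction I would exhibit the partition explicitly. In a Frobenius group $G$ with kernel $K$ and complement $H$, the kernel together with the distinct conjugates of $H$ partitions the non-identity elements, since every non-identity element lies either in $K$ or in a unique complement and distinct complements meet only in the identity. For $G \cong PSL(2,p^h)$ or $PGL(2,p^h)$ I would invoke the standard subgroup structure: each non-identity element is either unipotent, of order dividing $p$ and lying in a unique Sylow $p$-subgroup, or semisimple, lying in a unique cyclic maximal torus (split or non-split), and these subgroups pairwise intersect trivially. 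The Suzuki groups admit the analogous partition into Sylow $2$-subgroups and their cyclic Hall subgroups. For a $p$-group with $H_p(G) \neq G$ one builds a partition from the Hughes subgroup together with the cyclic subgroups of order $p$ generated by elements lying outside it; and groups of Hughes--Thompson type are defined precisely so that the required partition is present.

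The converse is the real content and is a deep classical result (due in essence to Baer, Kegel and Suzuki), so I would organise it around the rigid structure forced by a partition $\pi = \{H_1,\dots,H_m\}$. The first observations to record are that the components are pairwise trivial-intersection subgroups, that each non-identity element lies in exactly one component, and that conjugation permutes the $H_i$; this rigidity is what drives everything. I would then split on whether $G$ is a $p$-group. In the $p$-group case the aim is to show that a non-trivial partition exists exactly when $H_p(G) \neq G$: if $x \notin H_p(G)$ then $x$ necessarily has order $p$, and tracking how the elements of order $p$ must distribute among trivial-intersection components (none of which may meet $H_p(G)$ nontrivially) yields the stated equivalence. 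The delicate point here is the interaction with the Hughes problem, whose exceptional cases have to be absorbed into the definition of the Hughes subgroup rather than argued away.

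In the non-$p$-group case the goal is to show that $G$ is Frobenius, of Hughes--Thompson type, or one of the almost-simple families. The decisive and, I expect, \textbf{the principal obstacle} is the analysis of involution centralizers: the trivial-intersection structure of the components, combined with Suzuki's work on groups with TI Sylow subgroups and the Brauer--Suzuki--Wall characterization of $PSL(2,q)$, is what pins $G$ down as $PSL(2,q)$, $PGL(2,q)$ or a Suzuki group once $G$ is non-solvable and not Frobenius; the solvable non-$p$ non-Frobenius remainder should then be exactly the Hughes--Thompson type. This recognition step is precisely where genuine finite simple group theory is needed and cannot be reduced to the elementary combinatorics of TI-subgroups. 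Since none of the RWEDF and bimodality machinery developed earlier in the paper bears on this purely group-theoretic classification, it is appropriate to quote the result from \cite{Zap} rather than reprove it, and my ``proof'' is really a roadmap through that classical argument.
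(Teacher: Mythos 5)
The paper gives no proof of this theorem at all: it is imported verbatim from Zappa's survey \cite{Zap} (the underlying classification being the classical work of Baer, Kegel and Suzuki), so your conclusion that the correct move is to cite rather than reprove matches the paper's treatment exactly. Your roadmap of the two directions --- explicit partitions for each listed family, and the deep converse via the Hughes subgroup for $p$-groups and involution-centralizer/TI-subgroup analysis for the non-solvable case --- is accurate in outline, but there is nothing in the paper to compare it against, since the paper's ``proof'' is the citation itself.
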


It is known that the equally partitioned groups are precisely the $p$-groups of exponent $p$ \cite{isaacs1973}.  Each such group has a $*$-partition into subgroups of order $p$; some of them additionally permit $*$-partions into larger subgroups of equal size, although these have not been fully classfied.  Any equally-sized $*$-partition of a nonabelian $p$-group of exponent $p$ gives rise to a nonabelian EDF. 
\begin{example}
Let $G$ be the set of $3\times 3$ upper triangle matrices with entries from $\gf{3}$ that have $1$s on the main diagonal.  These are closed under multiplication and hence form a (nonabelian) group.  Each element of $G$ has the form
\begin{align*}
\begin{pmatrix}
1&a&b\\0&1&c\\0&0&1\end{pmatrix},&\intertext{and we have that}
\begin{pmatrix}
1&a&b\\0&1&c\\0&0&1\end{pmatrix}^3&= \begin{pmatrix}
1&3a&3b+3ac\\0&1&3c\\0&0&1\end{pmatrix}\equiv \begin{pmatrix}1&0&0\\0&1&0\\0&0&1\end{pmatrix},
\end{align*}
so each non-identity element has order $3$.  There are three choices for each of $a$, $b$ and $c$, and hence $G$ has order 27.  The order 3 subgroups partition its non-identity elements; this will therefore give a near-complete EDF with 13 sets of size 2.
\end{example}
We now give an example of a nonabelian RWEDF that is not an EDF.  Following convention, we use multiplicative rather than additive notation for non-abelian groups.  In particular, $xy^{-1}$ replaces $x-y$ (though for consistency we may still refer to this as the difference when there is no risk of confusion).
\begin{example}
Let $n$ be odd, and let $D_{2n}$ be the dihedral group that is given by the presentation $\{x, y: \mathrm{ord}(x)=n, \mathrm{ord}(y)=2, xy=yx^{-1}\}$.  (This is an example of a Frobenius group.)  A $*$-partition is given by $S_i=\langle yx^{i-1} \rangle$ for $1 \leq i \leq n$ and $S_{n+1}=\langle x \rangle$.  Here $|S_1|= \cdots = |S_n|=2$ and $|S_{n+1}|=n$. 

For $D_{10}=\{x, y: x^5=y^2=1, xy=yx^{-1}\}$, our $*$-partition yields the sets $A_1=\{y\}$, $A_2=\{yx\}$, $A_3=\{yx^2\}$, $A_4=\{yx^3\}$, $A_5=\{yx^4\}$ and $A_6=\{x,x^2,x^3,x^4\}$.  This is a $(10,6; 1,1,1,1,1,4; 5)$- RWEDF.  For each $A_i$ with $1 \leq i \leq 5$, every non-identity element of $D_{10}$ except for the single element of $A_i$ itself, appears once as a difference out of $A_i$, i.e. here $N_i(\delta)=1$ for $\delta \neq yx^{i-1}$ and $N_i(\delta)=0$ for $\delta = yx^{i-1}$. For $A_6$, every element of $y \langle x \rangle$ appears $4$ times as a difference out of $A_6$, i.e. $N_6(\delta)=4$ for $\delta \in y \langle x \rangle$ and $N_6(\delta)=0$ for $\delta \in A_6$. Hence, for $\delta \in D_{10}^*$, if $\delta \in y \langle x \rangle$ then the weighted sum is
$$  0+1\cdot1+1\cdot1+1\cdot1 + 1\cdot1 + \frac{1}{4} 4=5$$
while for $\delta \in \langle x \rangle$ the weighted sum is
$$ 1\cdot1+1\cdot1+1\cdot1+1\cdot1+1\cdot1+0=5.$$
\end{example}

\section{Conclusions and future work}

In this paper, we have introduced the RWEDF as a combinatorial way of viewing AMD codes which are R-optimal.  We have presented various RWEDF constructions, which yield both examples of known structures such as EDFs and SEDFs, and examples of objects not previously seen.   When we focus on the natural situation when the parameter $\ell$ is an integer, the concept of \emph{bimodality} seems to be a useful tool.

Emerging from this work are various very natural questions that remain open.

In Section \ref{m=2}, understanding RWEDFs with $m=2$ is shown to rely on an understanding of GSEDFs with $m=2$.

\begin{question}
Classify the GSEDFs with $m=2$.
\end{question}

The bimodal RWEDFs we have described give new infinite families of RWEDFs with integer $\ell$, but we know that integer $\ell$ does not imply bimodality.

\begin{question}
Find new RWEDFs with $\ell \in \mathbb{Z}$ that are not bimodal.
\end{question}

Although the case when $\ell$ is an integer seems mathematically natural, we can also ask whether it has structural significance for the objects involved.

\begin{question}
Is there a combinatorial characterization of RWEDFs with integer $\ell$?
\end{question}

We have not investigated the situation where $\ell \not\in \mathbb{Z}$ beyond the case of $m=2$.

\begin{question}
Find new RWEDFs with $\ell \in \mathbb{Q} \setminus \mathbb{Z}$ for $m>2$.
\end{question}

It would be especially interesting to find examples that are not EDFs.

\subsection*{Acknowledgements} The first author is supported by a Research Incentive Grant from The Carnegie Trust for the Universities of Scotland (Grant No. 70582).

\bibliography{edfbib}
\end{document}